\newtheorem{theorem}{Theorem}[section]
\newtheorem{proposition}[theorem]{Proposition}
\newtheorem{lemma}[theorem]{Lemma}
\newtheorem{corollary}[theorem]{Corollary}
\theoremstyle{definition}
\theoremstyle{remark}
\newcommand{\vv}[1]{\ensuremath{\boldsymbol{#1}}}
\newcommand{\defeq}{\overset{\text{\tiny def}}{=}}
\begin{document}

\title{Multipliers and weak multipliers of algebras}
%\footnote{}}

\author{Yuji Kobayashi and Sin-Ei Takahasi}

%------------------------------------------------------------ Address
\date{{{ Laboratory of Mathematics and Games}\\
{\small (https:{\tiny //}math-game-labo.com)}\\
\vspace{4mm}
{\scriptsize 2020 MSC Numbers:
Primary 43A22; Secondary 17A99, 46J10.\\
\textit{Keywords}: (weak) multiplier, (non)associative algebra,
Jordan algebra, zeropotent algebra, annihilator, nihil decomposition, matrix representation.\ \ \ \ \ \ \ \ \ \ \ \ \ \ \ \ \ 
}
}}

\maketitle

\begin{abstract}
We study general properties of multipliers and weak multipliers of algebras.  We apply the results to determine the (weak) multipliers of associative algebras and zeropotent algebras of dimension 3 over an algebraically closed field.
\end{abstract}

\section{Introduction}

Multipliers of algebras, in particular, multipliers of Banach algebras, have been discussed in analysis. 
In this paper we will discuss them in a purely algebraic manner. 

Let $B$ be a Banach algebra. A mapping $T: B \rightarrow B$ is called a multiplier of $B$, if it satisfies the condition (I) $xT(y) = T(xy) = T(x)y\ (x, y \in B)$.
Let $M(B)$ denote the collection of all multipliers of $B$, and
let $B(B)$ be the collection of all bounded linear operators on $B$.
Then $M(B)$ forms an algebra and $B(B)$ forms a Banach algebra.
$B$ is called {\it without order} if it has no nonzero left or right annihilator.
If $B$ is without order, then 
$M(B)$ forms a commutative closed subalgebra of $B(B)$ (see \cite{Ka}, Proposition 1.4.11). In 1953, Wendel \cite{W} proved an important result that the multiplier algebra of $L^1(G)$ on a locally compact abelian group $G$ is isometrically isomorphic to the measure algebra on $G$.
The general theory of multipliers of Banach algebras has been developed by Johnson \cite{J}.
A good reference to the theory of multipliers of Banach algebra is given in Larsen \cite{L}.

When $B$ is without order, $T$ is a multiplier if it satisfies the condition
(II) $xT(y) = T(x)y\ (x, y \in B)$.  Many researchers had been unaware of difference between conditions (I) and (II) until Zivari-Kazempour \cite{Z} (see also \cite{Z2}) recently clearly stated the difference.
We call a mapping $T$ satisfying (II) a weak multiplier and denote the set of weak multipliers of $B$ by $M'(B)$.  Then, $M(B)$ is  in general a proper subset of $M'(B)$.
Furthermore, (weak) multipliers can be defined for an algebra $A$ not necessarily associative, and they are not linear mappings in general.
We denote the spaces of linear multipliers and linear weak multipliers 
of $A$ by $LM(A)$ and $LM'(A)$ respectively.
$M(A)$ and $LM(A)$ are subalgebras of the algebra
$A^A$ consisting of all mappings from $A$ to itself.  Meanwhile, $M'(A)$
and $LM'(A)$ are closed under the operation $\circ$ defined by
$T \circ S = TS + ST$, and they form a Jordan algebra.% if the characteristic of base field is not equal to 2.

In Sections 2 - 5 we study general properties of (weak) multipliers.
In particular, in sections 3 and 4 we give a decomposition theorem (Theorem 3.1),
and a matrix equation (Theorem 4.2) for (weak) multipliers.  They
play an essential role to analyze (weak) multipliers.

%In Section 2 we study general properties of (weak) multipliers.
%In section 3 we give an important theorem (nihil decomposition), which 
%play an essential role to analyze (weak) multipliers.
%In section 4 we give a matrix equation for (weak multipliers).
%In section 5 we study (weak) multipliers of left unital associative algebras.

Complete classifications of associative algebras and zeropotent algebras of dimension 3 over an algebraically closed field of characteristic not equal to 2
were given in Kobayashi et al, \cite{KSTT} and \cite{KSTT2}.
In Sections 6 and 7 we completely determine the (linear) (weak) multipliers of 
those algebras.

\section{Multipliers and weak multipliers}
Let $K$ be a field and $A$ be a (not necessarily associative) algebra over $K$.
The set $A^A$ of all mappings from $A$ to $A$ forms an associative algebra
over $K$ in the usual manner.  Let $L(A)$ denotes the subalgebra of $A^A$ of all linear mappings from $A$ to $A$.

A mapping $T : A \rightarrow A$ is a \textit{weak multiplier} of $A$, if
\begin{equation} xT(y) = T(x)y \end{equation}
holds for any $x, y \in A$, and $T$ is a \textit{multiplier}, if 
\begin{equation} xT(y) = T(xy) = T(x)y \end{equation}
for any $x, y \in A$. Let $M(A)$ (resp. $M'(A)$) denote the set of all multipliers (resp. weak multipliers) of $A$.  Define
$$ LM(A) \defeq M(A) \cap L(A)\ \, \mbox{and}\ \, LM'(A) \defeq M'(A) \cap L(A).$$

\begin{proposition}
$M(A)$ (resp.  $LM(A)$) is a unital subalgebra of $A^A$ (resp.  $L(A)$), and
$M'(A)$ (resp.  $LM'(A)$) is a Jordan subalgebra of $A^A$ (resp.  $L(A)$).

\end{proposition}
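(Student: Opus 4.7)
The plan is to verify the stated closure properties directly from the defining identities (1) and (2).

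For $M(A)$, unitality is immediate since the identity map satisfies (2), and closure under addition and scalar multiplication follows from the $K$-bilinearity of the multiplication in $A$. For composition, given $T, S \in M(A)$, I apply (2) for $S$ to rewrite $T(S(xy))$ both as $T(xS(y))$ and as $T(S(x)y)$, then apply (2) for $T$ to each expression, obtaining $x\,(TS)(y) = T(S(xy)) = (TS)(x)\,y$. Thus $TS \in M(A)$. The case $LM(A)$ then follows because $L(A)$ is itself a unital subalgebra of $A^A$ and each of the operations considered preserves linearity.

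For $M'(A)$, closure under addition and scalar multiplication is again immediate. The decisive point is composition: for $T, S \in M'(A)$, repeated use of (1) gives only
$$x(TS)(y) \;=\; xT(S(y)) \;=\; T(x)\,S(y) \;=\; S(T(x))\,y \;=\; (ST)(x)\,y,$$
so the roles of $T$ and $S$ become swapped on the right-hand side and $TS$ is in general \emph{not} a weak multiplier. However, the calculation is symmetric in $T$ and $S$, so one simultaneously has $x(ST)(y) = (TS)(x)\,y$, and adding the two identities yields $x(T \circ S)(y) = (T \circ S)(x)\,y$. Hence $M'(A)$ is closed under $\circ = TS + ST$. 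Commutativity of $\circ$ and the Jordan identity are automatic for the anti-commutator product on any associative algebra (here $A^A$), so $M'(A)$ is a Jordan subalgebra; closure of $LM'(A)$ follows since linearity is preserved by $+$, scalar multiplication, and composition.

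The only substantive step is the symmetrization for weak multipliers: the failure of ordinary composition to preserve (1) is precisely cancelled by adding $ST$ to $TS$, which is the structural reason that $M'(A)$ acquires a Jordan rather than an associative structure. Everything else is routine algebraic verification using the bilinearity of the product of $A$.
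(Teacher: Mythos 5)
Your proof is correct and follows essentially the same route as the paper: the same composition computation for $M(A)$ using condition (2) twice, and the same key observation that for weak multipliers $x(TS)(y) = (ST)(x)y$, so that symmetrizing to $TS+ST$ restores condition (1). The only cosmetic difference is that you explicitly remark that the Jordan identity is automatic for the anti-commutator on the associative algebra $A^A$, which the paper leaves implicit.
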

\begin{proof}
First, the zero mapping is a multiplier because all the three terms in (2) are zero.
Secondly, the identity mapping is also a multiplier because the three terms in (2) are $xy$. 
Let $T, U \in M(A)$.  Then we have
\begin{equation}
x(T+U)(y) = xT(y) + xU(y) = T(xy) + U(xy) = T(x)y + U(x)y = (T+U)(x)y
\end{equation}
and
\begin{equation}
x(TU)(y) = xT(U(y)) = T(xU(y))  = TU(xy) = T( U(x)y) = (TU)(x)y
\end{equation}
for any $x, y \in A$.  Hence, $T+U, TU \in M(A)$.
Finally let $k \in K$, then 
\begin{equation} 
x(kT)(y) = kxT(y) = kT(xy) = kT(x)y = (kT)(x)y,
\end{equation}
and so $kT \in M(A)$.
Therefore, $M(A)$ is a unital subalgebra of $A^A$, and
$LM(A) = M(A)\cap L(A)$ is a unital subalgebra of $L(A)$.

Next, let $T, U \in M'(A)$.  Then, the equalities in (3)
and (5) hold
removing the center terms $T(xy)+U(xy)$ and $kT(xy)$ respectively.  
Hence, $M'(A)$ is a subspace of $A^A$.
Moreover, we have
$$ x(TU)(y) = xT(U(y)) = T(x)U(y) = U(T(x))y =UT(x)y$$
and similarly
$x(UT)(y) = TU(x)y$ for any $x, y \in A$.
Hence,
$$ x(TU + UT)(y) = (TU + UT)(x)y.$$
It follows that $TU + UT \in M'(A)$.\footnote{In general, for an associative algebra $A$ over a field $K$ of characteristic $\neq 2$, the
\textit{Jordan product} $\circ$ on $A$ is defined by $x\circ y = (xy + yx)/2$
for $x, y \in A$.}
\end{proof}
The opposite $A^{\rm op}$ of $A$ is the algebra with the same elements and the addition as $A$, but the multiplication $*$ in it is reversed, that is, $x*y = yx$ for all $x, y \in A$.
\begin{proposition}
$A$ and $A^{\rm op}$ have the same multipliers and weak multiplies, that is,
$$M(A^{\rm op}) = M(A)\ \ \mbox{and}\ \ M'(A^{\rm op}) = M'(A).$$
\end{proposition}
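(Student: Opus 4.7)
The plan is to show that both conditions (1) and (2) are symmetric under the exchange $x \leftrightarrow y$ in a way that exactly converts the $A$-identities into the $A^{\rm op}$-identities, so that $T$ satisfies one iff it satisfies the other. Since the underlying set, addition and scalar action of $A^{\rm op}$ coincide with those of $A$, the inclusions reduce to purely formal manipulations of (1) and (2).

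First I would treat the weak multiplier case. Writing $*$ for the product in $A^{\rm op}$, the defining identity $x * T(y) = T(x) * y$ unfolds, via $a*b = ba$, to $T(y)x = yT(x)$ for all $x,y \in A$. Now relabel $x \leftrightarrow y$: this is exactly the identity $xT(y) = T(x)y$, which is condition~(1) for $A$. Since the argument is reversible (the relabeling is an involution), we conclude $M'(A^{\rm op}) = M'(A)$.

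For the multiplier case I would do the same unfolding on (2). The condition in $A^{\rm op}$ reads
\begin{equation*}
x * T(y) \;=\; T(x * y) \;=\; T(x) * y,
\end{equation*}
which becomes $T(y)x = T(yx) = yT(x)$. After the substitution $x \leftrightarrow y$ this is precisely $xT(y) = T(xy) = T(x)y$, i.e.\ condition~(2) for $A$. As before the substitution is reversible, so $M(A^{\rm op}) = M(A)$.

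There is essentially no obstacle here beyond bookkeeping: one only needs to be careful that the middle term $T(xy)$ in (2) also re-labels correctly, which it does because $x *_{\rm op} y = yx$ and renaming the dummy variables turns $T(yx)$ into $T(xy)$. No linearity is used, so the same proof works with $M$ and $M'$ replaced by $LM$ and $LM'$.
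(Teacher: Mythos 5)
Your proof is correct and follows essentially the same route as the paper: both unfold the $A^{\rm op}$ conditions via $a*b=ba$ and observe that swapping the roles of $x$ and $y$ (which the paper presents as a single chain of equalities $x*T(y)=T(y)x=T(yx)=T(x*y)=yT(x)=T(x)*y$) converts them into the $A$ conditions. The relabeling step and its reversibility are exactly the content of the paper's "if and only if" chain, so nothing is missing.
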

\begin{proof}
Let $T \in A^A$.  Then,
$T \in M'(A)$, if and only if 
$$x*T(y) = T(y)x = yT(x) = T(x)*y$$ 
for any $x, y \in A$, if and only if $T \in M'(A^{\rm op})$.  Further,
$T \in M(A)$, if and only if 
$$x*T(y) = T(y)x = T(yx) = T(x*y) = yT(x) = T(x)*y$$ 
for any $x, y \in A$, if and only if $T \in M(A^{\rm op})$.
\end{proof}

Let Ann$_l(A)$ (resp. Ann$_r(A)$) be the left (resp. right) annihilator of $A$
and let $A_0$ be their intersection, that is,
$$ {\rm Ann}_l(A) = \{a \in A\,|\, ax = 0 {\ \rm for\ all\ } x \in A\},$$
$$ {\rm Ann}_r(A) = \{a \in A\,|\, xa = 0 {\ \rm for\ all\ } x \in A\} $$
and
$$ A_0 = {\rm Ann}_l(A) \cap {\rm Ann}_r(A). $$ 
They are all subspaces of $A$, and when $A$ is an associative algebra, they are two-sided ideals.
For a subset $X$ of $A$, $\langle X\rangle$ denotes the subspace of $A$ generated by $X$.

\begin{proposition}  A weak multiplier $T$ of $A$ such that $\langle T(A)\rangle\cap A_0 = \{0\}$ is a linear mapping.
\end{proposition}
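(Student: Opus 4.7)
The plan is to show directly that the defect from linearity, namely $T(x+y) - T(x) - T(y)$ and $T(kx) - kT(x)$, must lie in $A_0$; since these defects also obviously belong to $\langle T(A)\rangle$, the hypothesis forces them to vanish.

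First, fix $x, y \in A$ and an arbitrary $a \in A$. Using the weak multiplier identity $aT(u) = T(a)u$ twice, I would compute
\[
aT(x+y) = T(a)(x+y) = T(a)x + T(a)y = aT(x) + aT(y),
\]
so $a\bigl(T(x+y) - T(x) - T(y)\bigr) = 0$ for every $a$, putting the element $T(x+y) - T(x) - T(y)$ in $\mathrm{Ann}_r(A)$. Symmetrically, for any $b \in A$,
\[
T(x+y)\,b = (x+y)T(b) = xT(b) + yT(b) = T(x)b + T(y)b,
\]
which places the same element in $\mathrm{Ann}_l(A)$. Hence $T(x+y) - T(x) - T(y) \in A_0$. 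But this element is a linear combination of three vectors in $T(A)$, so it also lies in $\langle T(A)\rangle$. The hypothesis $\langle T(A)\rangle \cap A_0 = \{0\}$ then gives additivity.

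The argument for scalar homogeneity is entirely parallel: for $k \in K$, the same manipulations yield $a\bigl(T(kx) - kT(x)\bigr) = 0$ and $\bigl(T(kx) - kT(x)\bigr)b = 0$ for all $a, b \in A$, so $T(kx) - kT(x) \in A_0 \cap \langle T(A)\rangle = \{0\}$. Combining the two conclusions shows $T$ is linear.

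There is no real obstacle here; the only point that merits attention is making sure both sides of the defect are handled, i.e.\ verifying membership in both $\mathrm{Ann}_l(A)$ and $\mathrm{Ann}_r(A)$, since the definition of $A_0$ requires this intersection. Once those two computations are written out, the hypothesis on $\langle T(A)\rangle \cap A_0$ does all the remaining work.
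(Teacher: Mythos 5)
Your proof is correct and follows essentially the same route as the paper: both show that the linearity defect is annihilated on each side by every element of $A$, hence lies in $\mathrm{Ann}_l(A)\cap\mathrm{Ann}_r(A)=A_0$, and also in $\langle T(A)\rangle$, so it vanishes. The only cosmetic difference is that the paper treats the combined defect $T(ax+by)-aT(x)-bT(y)$ in a single step, whereas you handle additivity and scalar homogeneity separately.
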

\begin{proof}
Let $x, y, z \in A$ and $a, b \in K$, and let $T$ be a weak multiplier.
We have
\begin{align*} T(ax + by)z & = (ax+by)T(z) = axT(z) + byT(z) = aT(x)z + bT(y)z\\
& = (aT(x) + bT(y))z. 
\end{align*}
Because $z$ is arbitrary, we have $w = T(ax+by) - aT(x) - bT(y) \in {\rm Ann}_l(A)$.  Similarly, we can show $w \in {\rm Ann}_r(A)$, and so $w \in A_0$. 
Hence, if $\langle T(A)\rangle \cap A_0 = \{0\}$,
then $w = 0$ because $w \in \langle T(A)\rangle$.  Since $a, b, x, y$
are arbitrary, $T$ is a linear mapping.
\end{proof}

\begin{corollary}
If $A_0 = \{0\}$, then any weak multiplier is a linear mapping over $K$, that is, $M'(A) = LM'(A)$ and $M(A) = LM(A)$.
\end{corollary}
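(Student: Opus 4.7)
The plan is to invoke Proposition 2.3 directly, since the hypothesis $A_0 = \{0\}$ is the strongest possible version of the condition $\langle T(A)\rangle \cap A_0 = \{0\}$ appearing there.

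First I would observe that when $A_0 = \{0\}$, for \emph{every} mapping $T \in A^A$ we trivially have $\langle T(A)\rangle \cap A_0 \subseteq A_0 = \{0\}$. Hence Proposition 2.3 applies to every weak multiplier $T$ of $A$, yielding that $T$ is $K$-linear. This gives the inclusion $M'(A) \subseteq L(A)$, and combining with the definition $LM'(A) = M'(A) \cap L(A)$ produces $M'(A) = LM'(A)$.

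Next I would deduce the corresponding identity for ordinary multipliers. Since every multiplier is in particular a weak multiplier, we have $M(A) \subseteq M'(A) \subseteq L(A)$, so $M(A) \subseteq M(A) \cap L(A) = LM(A)$. The reverse inclusion $LM(A) \subseteq M(A)$ is immediate from the definition $LM(A) = M(A) \cap L(A)$, giving $M(A) = LM(A)$.

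There is no real obstacle here: the corollary is a direct specialization of Proposition 2.3, and the only content is the trivial remark that $A_0 = \{0\}$ forces $\langle T(A)\rangle \cap A_0 = \{0\}$ without any assumption on $T$. The proof can be written in a single sentence once Proposition 2.3 is cited.
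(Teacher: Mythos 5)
Your proposal is correct and is exactly the argument the paper intends: the corollary is stated without proof immediately after Proposition 2.3 precisely because $A_0=\{0\}$ makes the hypothesis $\langle T(A)\rangle\cap A_0=\{0\}$ automatic for every $T$. Your additional remark that $M(A)\subseteq M'(A)$ handles the multiplier case cleanly, so nothing is missing.
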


\begin{proposition}
If $T$ is a weak multiplier, then $T({\rm Ann}_l(A)) \subseteq {\rm Ann}_l(A)$, $T({\rm Ann}_r(A)) \subseteq {\rm Ann}_r(A)$ and $T(A_0) \subseteq A_0$ .
\end{proposition}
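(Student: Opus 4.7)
The plan is to exploit the defining weak multiplier identity $xT(y)=T(x)y$ directly, using it once for each inclusion. The key observation is that the identity allows us to swap the side on which $T$ sits, so that if $a$ annihilates on one side, we can arrange for that same side of the product to carry $a$ after applying $T$.

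For the first inclusion, I would take $a\in\mathrm{Ann}_l(A)$ and any $y\in A$, then rewrite $T(a)y=aT(y)$ using the weak multiplier relation; since $a$ is a left annihilator and $T(y)\in A$, the right-hand side vanishes, so $T(a)\in\mathrm{Ann}_l(A)$. For the second inclusion I would do the mirror computation: take $a\in\mathrm{Ann}_r(A)$ and $x\in A$, and use $xT(a)=T(x)a=0$ because $a$ is a right annihilator. The third inclusion $T(A_0)\subseteq A_0$ is then automatic from $A_0=\mathrm{Ann}_l(A)\cap\mathrm{Ann}_r(A)$ and the first two inclusions.

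There is essentially no obstacle here; the statement is an immediate consequence of the defining identity (1). The only small conceptual point worth noting is that, unlike in Proposition 2.3, one does not need to invoke any hypothesis about $\langle T(A)\rangle\cap A_0$ or linearity of $T$: the invariance of the annihilators is a purely pointwise property of weak multipliers.
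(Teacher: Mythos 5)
Your argument is correct and is exactly the paper's proof: apply the identity $xT(y)=T(x)y$ with the annihilating element on the appropriate side to get $T(a)y=aT(y)=0$ (resp. $xT(a)=T(x)a=0$), and intersect for $A_0$. Nothing further is needed.
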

\begin{proof}
Let $x \in {\rm Ann}_l(A)$, then for any $y \in A$ we have
$$ 0 = xT(y) = T(x)y. $$
Hence, $T(x) \in {\rm Ann}_l(A)$.  The other cases are similar.
\end{proof} 

In this paper we denote the subset $\{xy\,|\,x, y \in A\}$ of $A$ by $A^2$, though usually $A^2$ denotes the subspace of $A$ generated by this set.
%$A$ is the \textit{zero algebra} if $A^2 = \{0\}$, that is, $ab = 0$ for any $a, b \in A$.

\begin{proposition} 
Any mapping $T: A \rightarrow A$ such that $T(A) \subseteq A_0$ is a weak multiplier.  
Such a mapping $T$ is a multiplier if and only if $T(A^2) = \{0\}$.
In particular, if $A$ is the zero algebra, %that is, $A^2 = \{0\}$, 
every mapping $T$ is a weak multiplier, and it is a multiplier if only if $T(0) = 0$.  
\end{proposition}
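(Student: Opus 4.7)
The plan is to unpack the weak-multiplier identity (1) and the multiplier identity (2) directly, using only the definitions of $\mathrm{Ann}_l(A)$, $\mathrm{Ann}_r(A)$, and $A_0$. The work is essentially bookkeeping; no serious obstacle is expected.

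First, I would handle the weak-multiplier claim. Suppose $T(A)\subseteq A_0$. By definition $A_0 = \mathrm{Ann}_l(A)\cap \mathrm{Ann}_r(A)$, so for any $x,y\in A$, $T(y)\in \mathrm{Ann}_r(A)$ gives $xT(y)=0$, and $T(x)\in \mathrm{Ann}_l(A)$ gives $T(x)y=0$. Hence both sides of (1) vanish, so (1) holds trivially and $T\in M'(A)$.

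Next, I would analyze when such a $T$ is a multiplier. Given that $xT(y)=T(x)y=0$ has already been established, condition (2) reduces to $T(xy)=0$ for all $x,y\in A$. Since the paper declares $A^2=\{xy\mid x,y\in A\}$ (as a set, not a subspace), this is exactly $T(A^2)=\{0\}$. This gives the biconditional in the second sentence, and here it is worth noting explicitly that we do \emph{not} need to pass to the subspace generated by $A^2$: the definition only requires the identity for individual products $xy$.

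Finally, I would specialize to the zero algebra. In that case $xy=0$ for all $x,y$, so $A_0=A$ and $A^2=\{0\}$. The hypothesis $T(A)\subseteq A_0$ is then automatic for every $T\colon A\to A$, so by the first part every mapping is a weak multiplier. Moreover $T(A^2)=\{T(0)\}$, which equals $\{0\}$ if and only if $T(0)=0$; applying the second part gives the multiplier characterization. The only tiny subtlety to flag is, again, the set-theoretic convention for $A^2$, which is what makes the condition $T(0)=0$ (rather than the stronger linearity-style requirement $T\equiv 0$ on the span) sufficient.
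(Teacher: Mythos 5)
Your proposal is correct and follows essentially the same route as the paper: both sides of (1) vanish because $T(A)\subseteq A_0$, condition (2) then reduces to $T(A^2)=\{0\}$, and the zero-algebra case specializes this via $A^2=\{0\}$. Your explicit remark that $A^2$ is the set of products (not its span) matches the convention the paper states just before the proposition, so no further comment is needed.
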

\begin{proof}
If $T(A) \subseteq A_0$, the both sides are 0 in (1) and $T$ is a weak multiplier.  This $T$ is a multiplier, if only if the term $T(xy)$ in the middle of (2) is $0$ for all $x, y \in A$, that is, $T(A^2) = \{0\}$. 
If $A$ is the zero algebra, then $A = A_0$ and $A^2$ = \{0\}.  Hence, any $T$ is a weak multiplier and it is a multiplier if and only if $T(0) = 0$.
\end{proof}

\section{Nihil decomposition}
Let $A_1$ be a subspace of $A$ such that 
\begin{equation}A = A_1 \oplus A_0.\end{equation}
Here, $A_1$ is not unique, but choosing an appropriate $A_1$ will become important later.
When $A_1$ is fixed, any mapping $T \in A^A$ is uniquely decomposed as
\begin{equation}
T = T_1 + T_0
\end{equation}
with $T_1(A) \subseteq A_1$ and $T_0(A) \subseteq A_0$.
We call (6) and (7) a {\it nihil decompositions} of $A$ and  $T$ respectively.
Let $\pi: A \rightarrow A_1$ be the projection and $\mu: A_1 \rightarrow A$ be the embedding, that is, 
$\pi(x_1+x_0) = \mu(x_1) = x_1$ for $x_1 \in A_1$ and $x_0 \in A_0$. %, and $\mu(x_1) = x_1$ for $x_1 \in A_1$.
%For any linear map $T: A \rightarrow A$ such that $T(A) \subseteq A_1$ and $T(A_0) = \{0\}$ we have a linear map $\bar{T}: A_1 \rightarrow A_1$ given by $\bar{T}(x_1) = T(x_1)$ for $x_1 \in A_1$.  The mapping $\Psi$ sending such $T$ to
%$\bar{T}$ is a bijective linear map.  The mapping $\Phi$ sending a linear map: $A_1 \rightarrow A_1$ to $\mu\circ \bar{T} \circ \pi$ is the inverse mapping of $\Psi$;
%$T = \mu\circ\bar{T}\circ\pi$
%This is a one to one correspondence.  Corresponding a linear map $\bar{T}: A_1 \rightarrow A_1$ to $\mu\circ \bar{T} \circ \pi$ is the inverse correspondence; $T = \mu\circ \bar{T} \circ \pi$.

Let $M_1(A)$ (resp. $M_0(A)$) denote the set of all multipliers $T$ of $A$ with $T(A) \subseteq A_1$ 
(resp. $T(A) \subseteq A_0$).  Similarly, the sets $M'_1(A)$ and $M'_0(A)$ of weak multipliers of $A$ are defined.  Also, set
$$LM_i(A) = M_i(A) \cap L(A)\ \,\mbox{and}\ \,LM'_i(A) = M'_i(A) \cap L(A)$$ 
for $i = 0,1$.  By Proposition 2.3 we see
$$M'_1(A) = LM'_1(A)\ \,\mbox{and}\ \,M_1(A) = LM_1(A),$$
and by Proposition 2.6 we have
\begin{equation} M'_0(A) = A_0^A,\, \, M_0(A) = \{T \in A_0^A\,|\,T(A^2) = \{0\}\}. \end{equation}
%$$LM'_0(A) = L(A,A_0), LM(A) =  \{T \in L(A,A_0)\,|\,T(A^2) = 0\}$$
%We have $M(A) = M_1(A) \oplus M_0(A)$ and $M'(A) = M'_1(A) \oplus M'_0(A)$.

%Here, if $A_1$ is a subalgebra of $A$, then $A_1 \equiv \bar{A}_{lr}$ by the homomorphism theorem. 
%So there is an isomorphism $\mu: \bar{A}_{lr} \rightarrow A$ such that $\mu(A_l) = A_1$
%and $\lambda\circ \mu$ is the identity mapping of $\bar{A}_{lr}$,
%where $\lambda: A \rightarrow \bar{A}_{lr}$ is the natural surjection.   We have $T_1 = \mu\circ\lambda\circ T$.

\begin{theorem}
%Let $A_1$ be a subspace of $A$ such that 
Let $A$ = $A_1 \oplus A_0$ and $T = T_1 + T_0$ be nihil decompositions of $A$ and $T \in A^A$ respectively.
%and $T \in A^A$ be decomposed as (6).  

(i)  $T$ is a weak multiplier, if and only if $T_1$ is a weak multiplier.
If $T$ is a weak multiplier, $T_1$ is a linear mapping satisfying
$T_1(A_0) = \{0\}$. 
%If $T$ is a weak multiplier, $T_1$ is a weak multiplier which is a linear mapping satisfying
%$T_1(A_0) = \{0\}$.  
%Conversely, $T$ with weak multiplier $T_1$ is a weak multiplier.  

(ii)  If $T_1$ is a multiplier and $T_0(A^2) =  \{0\}$, then $T$ is a multiplier.
If $A_1$ is a subalgebra of $A$, the converse is also true.

%If $T_1$ is a multiplier and $T_0(A^2) =  \{0\}$, $T$ is a multiplier.
Suppose that $A_1$ is a subalgebra of $A$, and let $\Phi$ be a mapping sending $R \in (A_1)^{A_1}$ to $\mu \circ R \circ \pi \in A^A$.  Then,

%(iii)  $T$ is a multiplier if and only if $T_1$ is a multiplier and $T_0(A^2) = \{0\}$.
%Then, $T$ is a multiplier if and only if $T_1$ is a multiplier and $T_0(A^2) = \{0\}$.
(iii)  $\Phi$ gives an algebra isomorphism from $M(A_1)$ onto $M_1(A)$ and a Jordan isomorphism from $M'(A_1)$ onto $M'_1(A)$. 
%The mapping $\Phi$ sending $R \in (A_1)^{A_1}$ to $\mu \circ R \circ \pi$ gives an algebra isomorphism from $M(A_1)$ onto $M_1(A)$ and a Jordan isomorphism from $M'(A_1)$ onto $M'_1(A)$. 
\end{theorem}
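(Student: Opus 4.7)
The lever I will use throughout is that $A_0 = \mathrm{Ann}_l(A)\cap\mathrm{Ann}_r(A)$, so any element of $A_0$ annihilates multiplication on either side. This lets me strip the $T_0$-component off every bilinear expression and reduces all three parts to computations on $A_1$.

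For (i), since $T_0(y)\in A_0\subseteq\mathrm{Ann}_r(A)$ we have $xT_0(y)=0$, and dually $T_0(x)y=0$. Therefore $xT(y)=xT_1(y)$ and $T(x)y=T_1(x)y$, and the weak multiplier identity for $T$ is literally the weak multiplier identity for $T_1$. Assuming this, $T_1(A)\subseteq A_1$ gives $\langle T_1(A)\rangle\cap A_0\subseteq A_1\cap A_0=\{0\}$, so Proposition 2.3 forces $T_1$ to be linear. By Proposition 2.5 applied to $T_1$, $T_1(A_0)\subseteq A_0$; combined with $T_1(A_0)\subseteq A_1$, this yields $T_1(A_0)\subseteq A_1\cap A_0=\{0\}$.

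For (ii), since the weak multiplier identity is already handled by (i), only $T(xy)=xT(y)$ remains. The easy direction is immediate: $T(xy)=T_1(xy)+T_0(xy)=T_1(xy)=xT_1(y)=xT(y)$, and likewise $T(xy)=T(x)y$. For the converse, assuming $A_1$ is a subalgebra, write $x=x_1+x_0$; since $x_0\in\mathrm{Ann}_l(A)$ we get $xT_1(y)=x_1T_1(y)$, which lies in $A_1$ because $A_1$ is closed under multiplication. Hence $T(xy)=xT(y)\in A_1$, while on the other hand $T(xy)=T_1(xy)+T_0(xy)$ with $T_1(xy)\in A_1$ and $T_0(xy)\in A_0$. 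The uniqueness of the nihil decomposition forces $T_0(xy)=0$ and $T_1(xy)=xT_1(y)$, giving $T_0(A^2)=\{0\}$ and the multiplier identity for $T_1$ (the other half $T_1(xy)=T_1(x)y$ either follows symmetrically or from the already-established weak multiplier identity for $T_1$).

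For (iii), the key auxiliary observation is that for any $x,y\in A$ one has $xy=\pi(x)\pi(y)$, because every cross term in the expansion of $(x_1+x_0)(y_1+y_0)$ contains an $A_0$-factor. Hence for $R\in M(A_1)$, each of $x\,\Phi(R)(y)$, $\Phi(R)(xy)$ and $\Phi(R)(x)\,y$ collapses to $R(\pi(x)\pi(y))$, so $\Phi(R)\in M_1(A)$; dropping the middle term handles $M'(A_1)\to M'_1(A)$. For the inverse, given $S\in M_1(A)$, part (i) says $S$ is linear with $S(A_0)=\{0\}$, so $S=\mu\circ(S|_{A_1})\circ\pi=\Phi(S|_{A_1})$, and $S|_{A_1}\in M(A_1)$ by restriction. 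That $\Phi$ respects addition, scalar multiplication and composition follows from $\pi\circ\mu=\mathrm{id}_{A_1}$ in a single line, and the Jordan-product statement then follows formally from $\Phi(RS+SR)=\Phi(R)\Phi(S)+\Phi(S)\Phi(R)$. The one place I expect to have to be careful is the converse in (ii): the hypothesis that $A_1$ be a subalgebra is used exactly to guarantee $x_1T_1(y)\in A_1$, without which the uniqueness-of-decomposition argument fails.
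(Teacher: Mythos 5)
Your proposal is correct and follows essentially the same route as the paper: the annihilator property of $A_0$ to reduce everything to $T_1$, Propositions 2.3 and 2.5 for linearity and $T_1(A_0)=\{0\}$, the uniqueness of the $A_1\oplus A_0$ decomposition (with $x_1T_1(y)\in A_1$ via the subalgebra hypothesis) for the converse in (ii), and $\pi\circ\mu=\mathrm{id}_{A_1}$ together with $xy=\pi(x)\pi(y)$ for (iii). No gaps.
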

\begin{proof}
Let $x, y \in A$.  

(i)  If $T$ is a weak multiplier, then
$$ xT_1(y) = x(T(y) - T_0(y)) = xT(y) = T(x)y = T_1(x)y.$$
Thus, $T_1$ is also a weak multiplier.
Moreover, $T_1$ is a linear mapping by Proposition 2.3 and $T_1(A_0) \subseteq A_1 \cap A_0 = \{0\}$ 
by Proposition 2.5.
Conversely, if $T_1$ is a weak multiplier, then
$$ xT(y) = xT_1(y) = T_1(x)y = T(x)y, $$
and so $T$ is a weak multiplier.  

(ii)  If $T_1$ is a multiplier and $T_0(A^2) = 0$, then $T$ is a multiplier because
\begin{eqnarray*}
xT(y) = xT_1(y) = T_1(xy) = T(xy) - T_0(xy) = T(xy) %= T_1(xy) 
= T_1(x)y = T(x)y.
\end{eqnarray*}

Next suppose that $A_1$ is a subalgebra.
If $T$ is a multiplier, then for any $x, y \in A$ we have
\begin{equation}
T_1(xy) + T_0(xy) = T(xy) = xT(y) = x_1T_1(y),
\end{equation}
where $x = x_1+x_0$ with $x_1 \in A_1$ and $x_0 \in A_0$.
Here, $x_1T_1(y) \in A_1$ because $A_1$ is a subalgebra, and 
thus, we have $T_0(xy) = x_1T_1(y) - T_1(xy) \in A_0 \cap A_1 = \{0\}$.  Since 
$x, y$ are arbitrary, we get $T_0(A^2) = \{0\}$.  Moreover, $T_1$ is a multiplier because
$ T_1(xy) = x_1T_1(y) = xT_1(y)$ by (9)
and similarly
$T_1(xy) = T_1(x)y$.
The converse is already proved above.

(iii)  Let $S \in (A_1)^{A_1}$ and $x = x_1+x_0, y = y_1 + y_0 \in A$ with $x_1, y_1 \in A_1$ and $x_0, y_0 \in A_0$.  Then, $\pi(x) = \mu(x_1)  = x_1$, $\pi(y) = \mu(y_1) = y_1$ and 
\begin{equation*}
\Phi(S)(x) = \mu(S(\pi(x))) = \mu(S(x_1)) = S(x_1). 
\end{equation*}
Thus, if $S \in M'(A_1)$, we have
\begin{align*}
x\Phi(S)(y) = xS(y_1) = x_1S(y_1) = S(x_1)y_1 = \Phi(S)(x)y_1 = \Phi(S)(x)y. 
\end{align*}
Hence, $\Phi(S) \in M'_1(A)$.  Moreover, if $S \in M(A_1)$, then because $\pi$ is a homomorphism, we have
$$
\Phi(S)(xy) = S(\pi(xy)) = S(x_1y_1) = x_1S(y_1) = x\Phi(S)(y),
%\Phi(S)(xy) = \mu(S(\pi(xy))) = \mu(S(x_1y_1)) = \mu(x_1S(y_1)) = x\Phi(S)(y),
$$
and hence $\Phi(S) \in M_1(A)$.
Conversely, let $T \in M'_1(A)$, then because $T$ is a linear mapping % by Proposition 2.3 and 
satisfying $T(A_0) = \{0\}$, % by Proposition 2.5, 
there is a linear mapping $S \in L(A_1)$ on $A_1$ such that $\Phi(S)  = T$,
that is, $S(x_1) = T(x) = T(x_1)$.
We have 
$$ x_1S(y_1) = x_1T(y_1) = T(x_1)y_1 = S(x_1)y_1, $$
and hence $S \in M'(A_1)$.  Therefore, $\Phi$ is a linear isomorphism from $M'(A_1)$ to $M'_1(A)$.
Similarly, $\Phi$ gives a linear isomorphism from $M(A_1)$ to $M_1(A)$.
Moreover, for $T, U \in M'(A_1)$, we have 
$$\Phi(TU) = \mu\circ T\circ U\circ \pi = \mu\circ T\circ\pi\circ \mu \circ U\circ \pi = \Phi(T)\Phi(U).$$
Thus, $\Phi$ gives an isomorphism of algebras from $M(A_1)$ to $M_1(A)$ and a Jordan isomorphism from $M'(A_1)$ to $M'_1(A)$.
\end{proof}

Theorem 3.1 implies
$$
M'(A) = \ M'_1(A) \oplus M'_0(A),\ M_1(A) \oplus M_0(A) \subseteq M(A),% \subseteq M'(A).
$$
where $M'_0(A)$ and $M_0(A)$ are given as (8).
%$$ M'_0(A)  = A_0^A,\ M_0(A)\ = \{T \in A_0^A\,|\,T(A^2) = \{0\}\}. $$
Moreover, if $A_1$ is a subalgebra, we have
\begin{equation}
M'(A) \cong M'(A_1) \oplus (A_0)^A,\ \ 
M(A) \cong M(A_1) \oplus \{T \in (A_0)^A\,|\,T(A^2) = \{0\}\}.
\end{equation}

\begin{corollary}
Any weak multiplier $T$ is written as 
\begin{equation}T = T_1 + R\end{equation}
with $T_1 \in LM_1'(A)$ and $R \in (A_0)^{A},$
and it is a multiplier if and only if
\begin{equation}
R(x_1y_1) = x_1T_1(y_1) - T_1(x_1y_1)
\end{equation}
for any $x_1, y_1 \in A_1$.
\end{corollary}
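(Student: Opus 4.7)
The first assertion is essentially a restatement of Theorem 3.1(i): take the nihil decomposition $T = T_1 + T_0$ with $T_1(A) \subseteq A_1$ and $T_0(A) \subseteq A_0$, set $R := T_0 \in (A_0)^{A}$, and invoke Theorem 3.1(i) to see that $T_1$ is a linear weak multiplier with image in $A_1$; hence $T_1 \in LM'_1(A)$.

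For the multiplier characterisation, my plan is to use that $T$ is already a weak multiplier, so $xT(y) = T(x)y$ holds automatically, and therefore $T$ is a multiplier if and only if $T(xy) = xT(y)$ for all $x, y \in A$. Writing $x = x_1 + x_0$ and $y = y_1 + y_0$ with $x_1, y_1 \in A_1$ and $x_0, y_0 \in A_0$, the annihilator property of $A_0$ collapses the product to $xy = x_1 y_1$. Combining this with the facts from Theorem 3.1(i) that $T_1(A_0) = \{0\}$ and $T_0(y) \in A_0$ (so that $xT_0(y) = 0$ and $x_0 T_1(y_1) = 0$), a short direct expansion reduces $T(xy) = xT(y)$ to
$$T_1(x_1 y_1) + R(x_1 y_1) = x_1 T_1(y_1),$$
which is exactly the displayed identity after rearrangement. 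Since $x_0, y_0$ drop out while $x_1, y_1$ range freely over $A_1$, this condition is equivalent to $T$ being a multiplier.

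I do not anticipate any real obstacle: the whole argument is a direct unpacking using the definition of $A_0$ and the information from Theorem 3.1(i). The one point worth flagging is that $A_1$ is \emph{not} assumed here to be a subalgebra, so $x_1 y_1$ need not lie in $A_1$; this is harmless because we apply the full mapping $T$ on $A$ (and then decompose its image) rather than restrict $T_1$ to $A_1$.
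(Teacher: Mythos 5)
Your proposal is correct and follows essentially the same route as the paper: both obtain $T = T_1 + R$ from the nihil decomposition together with Theorem 3.1(i), and both reduce the multiplier condition to $T(x_1y_1) = T_1(x_1y_1) + R(x_1y_1) = x_1T_1(y_1)$ by using that $A_0$ kills all products and that $T_1(A_0) = \{0\}$. Your closing remark that $A_1$ need not be a subalgebra (so $x_1y_1$ may leave $A_1$) is a valid observation and does not affect the argument.
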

\begin{proof}
As stated above $T$ is written as (11). % and by Proposition 2.5, $T_1(A_0) = \{0\}$.  
Let $x = x_1+x_0, y = y_1 + y_0 \in A$ with $x_1, y_1 \in A_1$ and $x_0, y_0 \in A_0$ be arbitrary, then we have
\begin{equation}xT(y) = x_1(T_1(y) + R(y)) = x_1T_1(y) = x_1T_1(y_1)\end{equation}
because $R(A) \subseteq A_0$ and $T_1(A_0) = \{0\}$.
The last term in (13) is also equal to $T_1(x_1)y_1 = T(x)y$. 
Hence, $T$ is a multiplier, if and only if it is equal to $T(xy) = T(x_1y_1) = T_1(x_1y_1) + R(x_1y_1)$, if and only if (12) holds.
\end{proof}

\section{Linear multipliers and matrix equation}
In this section, $A$ is a finite dimensional algebra over $K$.
We drive a matrix equation for a linear mapping on $A$ to be a (weak) multiplier.
%For a linear space $X$ over $K$ let $L(X)$ denote the linear space of all linear mappings from $X$ to $X$. 
%Set $LM(A) = M(A) \cup L(A)$ and $LM'(A) = M'(A) \cup L(A)$.
Suppose that $A$ is $n$-dimensional with basis $E = \{e_1, e_2, \dots, e_n\}$. 

\begin{lemma}  
A linear mapping $T: A \rightarrow A$ is a weak multiplier if and only if
\begin{equation} e_iT(e_j) = T(e_i)e_j, \end{equation}
%for all $e_i, e_j \in E$.
and it is a multiplier if and only if 
\begin{equation} T(e_ie_j) = e_iT(e_j) = T(e_i)e_j, \end{equation}
for all $e_i, e_j \in E$.
\end{lemma}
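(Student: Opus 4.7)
The plan is to observe that this is a routine extension-by-linearity argument: the "only if" directions are immediate by specializing the defining identities (1) and (2) to pairs of basis vectors, so the real content is the "if" direction, where one must promote the basis identities to identities on all of $A$.

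For the weak multiplier case, I would fix arbitrary $x, y \in A$ and expand them in the basis as $x = \sum_i a_i e_i$ and $y = \sum_j b_j e_j$ with $a_i, b_j \in K$. Using linearity of $T$ together with the left/right distributivity of the (not necessarily associative) product in $A$, I would compute
\begin{equation*}
xT(y) = \sum_{i,j} a_i b_j \, e_i T(e_j), \qquad T(x)y = \sum_{i,j} a_i b_j \, T(e_i) e_j,
\end{equation*}
and conclude that the two sums agree term-by-term from the hypothesis (14). This gives $xT(y) = T(x)y$ for all $x, y \in A$, hence $T \in M'(A)$.

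For the multiplier case, I would reuse the computation above for the two outer equalities and only need to address $T(xy)$. Using bilinearity of multiplication, $xy = \sum_{i,j} a_i b_j \, e_i e_j$, and then linearity of $T$ gives
\begin{equation*}
T(xy) = \sum_{i,j} a_i b_j \, T(e_i e_j),
\end{equation*}
which matches $xT(y)$ and $T(x)y$ term-by-term by (15). Thus $T \in M(A)$.

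There is no real obstacle here; the only subtle point is to notice that the argument uses only the distributivity (bilinearity) of the multiplication on $A$, not associativity, so it applies in the general nonassociative setting of the paper. I would mention this explicitly to justify the free manipulation of the bilinear form $(x,y) \mapsto xy$.
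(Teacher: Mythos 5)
Your proposal is correct and matches the paper's own argument: both note that necessity is immediate and prove sufficiency by expanding $x$ and $y$ in the basis, using linearity of $T$ and bilinearity of the product to reduce the identity on all of $A$ to the basis identities (14) and (15). Your explicit remark that only bilinearity, not associativity, is used is a nice clarification but not a different method.
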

\begin{proof}
The necessity of the conditions (14) and (15) is obvious.
Let $x = x_1e_1 + x_2e_2+ \cdots + x_ne_n, y = y_1e_1+y_2e_2+\cdots +y_ne_n \in A$ with $x_1,x_2,\dots,x_n, y_1,y_2,\dots,y_n \in K$.
If $T$ satisfies (14), then we have
\begin{eqnarray*}
xT(y) &=& (\sum_i x_ie_i)T(\sum_j y_je_j) = (\sum_i x_ie_i)(\sum_j y_jT(e_j)) =\sum_{i,j} x_iy_je_iT(e_j)\\
&=& \sum_{i,j}x_iy_jT(e_i)e_j = (\sum_ix_iT(e_i))(\sum_jy_je_j) = T(x)y.
\end{eqnarray*}
Hence, $T$ is a weak multiplier.  Moreover, if $T$ satisfies (15), it is a multiplier in a similar way
%\begin{eqnarray*} 
%T(xy) &=& T((\sum_i x_ie_i)(\sum_j y_je_j)) = T(\sum_{i,j} x_iy_je_ie_j) = \sum_{i,j}x_iy_jT(e_ie_j)\\ 
%&=& \sum_{i,j}x_iy_je_iT(e_j) = (\sum_i x_ie_i)x(\sum_j y_jT(y)) = xT(y) = T(x)y.
%\end{eqnarray*}
%Thus, $T$ is a multiplier.
\end{proof}
Let $\vv{A}$ (we use the bold character) be the multiplication table of $A$ on $E$. $\vv{A}$ is a matrix whose elements are from $A$ defined by 
\begin{equation} \vv{A} = \vv{E}^t\vv{E}, \end{equation}
where $\vv{E} = (e_1, e_2, \dots, e_n)$ (we again use the bold face $\vv{E}$) is the row vector
consisting the basis elements.
For a linear mapping $T$ on $A$ and a matrix $\vv{B}$ over $A$, $T(\vv{B})$ denotes the matrix obtained by applying $T$ component-wise, that is, the  $(i,j)$-element of $T(\vv{B})$ is $T(b_{ij})$ for the $(i,j)$-element $b_{ij}$ of $\vv{B}$.\footnote
{This is called a broadcasting (cf. \cite{TKTSN}).}
We use the same character $T$ for the representation matrix of $T$ on $E$,
that is,
%\begin{equation}\left(T(e_1), T(e_2), \dots, T(e_n)\right)^t = TE^t.\end{equation}
\begin{equation}%\left(T(e_1), T(e_2), \dots, T(e_n)\right) = ET.\end{equation}.
T(\vv{E}) = \vv{E}T.\end{equation}.\vspace{-3mm}

\begin{theorem}
A linear mapping $T$ is a weak multiplier of $A$ if and only if
%\begin{equation}TA = AT^t,\end{equation}
\begin{equation}\vv{A}T = T^t\vv{A},\end{equation}
and $T$ is a multiplier if and only if
\begin{equation}T(\vv{A}) = \vv{A}T = T^t\vv{A}.\end{equation}
\end{theorem}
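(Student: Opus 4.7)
The plan is to translate the entry-wise conditions from Lemma 4.1 directly into matrix identities and verify they agree. By Lemma 4.1, linearity of $T$ reduces the (weak) multiplier condition to the finitely many equations $e_i T(e_j) = T(e_i) e_j$ (plus $T(e_i e_j) = e_i T(e_j)$ in the multiplier case), so it suffices to show that each matrix identity in (18)--(19) is exactly the collection of these $n^2$ scalar equations, indexed by $(i,j)$.

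First I would compute the $(i,j)$-entry of each of the three matrices $T(\vv{A})$, $\vv{A}T$, and $T^t\vv{A}$. Since $\vv{A} = \vv{E}^t\vv{E}$, the $(i,j)$-entry of $\vv{A}$ is $e_i e_j$, and hence the $(i,j)$-entry of $T(\vv{A})$ is $T(e_i e_j)$ by definition of the broadcast. For $\vv{A}T$, using that the representation matrix $T = (T_{kj})$ satisfies $T(e_j) = \sum_k T_{kj}\,e_k$ (from $T(\vv{E}) = \vv{E}T$), I compute
\begin{equation*}
(\vv{A}T)_{ij} = \sum_{k} (e_i e_k)\, T_{kj} = e_i \sum_{k} T_{kj}\, e_k = e_i\, T(e_j).
\end{equation*}
For $T^t\vv{A}$, with $(T^t)_{ik} = T_{ki}$, I get
\begin{equation*}
(T^t\vv{A})_{ij} = \sum_{k} T_{ki} (e_k e_j) = \Bigl(\sum_{k} T_{ki}\, e_k\Bigr) e_j = T(e_i)\, e_j.
\end{equation*}

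With these identifications the theorem is immediate: $\vv{A}T = T^t\vv{A}$ holds if and only if $e_i T(e_j) = T(e_i) e_j$ for all $i,j$, which by Lemma 4.1 is equivalent to $T$ being a weak multiplier. Similarly, $T(\vv{A}) = \vv{A}T$ holds if and only if $T(e_i e_j) = e_i T(e_j)$ for all $i,j$, so conjoining the two matrix equalities yields both conditions in (15), and again by Lemma 4.1 this is equivalent to $T$ being a multiplier.

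There is no real obstacle here; the only step that needs mild care is keeping straight where the transpose enters. The cleanest way is to observe that for a scalar matrix $T$ and a matrix $\vv{B}$ over $A$ one has $T(\vv{B}\,T) = T\,T(\vv{B})$ entry-wise (because $T$ is linear and the entries of the second $T$ are scalars), and to exploit $\vv{A} = \vv{E}^t\vv{E}$ together with $T(\vv{E}) = \vv{E}T$; the transpose then arises naturally because applying $T$ on the left factor $\vv{E}^t$ of $\vv{A}$ produces $(\vv{E}T)^t = T^t\vv{E}^t$.
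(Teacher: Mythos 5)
Your proof is correct and follows essentially the same route as the paper: both identify the $(i,j)$-entries of $\vv{A}T$, $T^t\vv{A}$, and $T(\vv{A})$ with $e_iT(e_j)$, $T(e_i)e_j$, and $T(e_ie_j)$ respectively (the paper does this by factoring $\vv{A}T=\vv{E}^tT(\vv{E})$ and $T^t\vv{A}=T(\vv{E})^t\vv{E}$, you by direct entry-wise summation) and then invoke Lemma 4.1.
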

\begin{proof}
By (16) and (17) we have
\begin{equation} (e_1, e_2, \dots, e_n)^t(T(e_1), T(e_2), \dots, T(e_n)) = \vv{E}^tT(\vv{E})
= \vv{E}^t\vv{E}T = \vv{A}T\end{equation}
and
\begin{equation}(T(e_1), T(e_2), \dots, T(e_2))^t(e_1, e_2, \dots, e_n) = T(\vv{E})^t\vv{E} = T^t\vv{E}^t\vv{E} = T^t\vv{A}.\end{equation}
By Lemma 4.1, $T$ is a weak multiplier, if and only if (20) and (21) are equal, if and only if (18) holds.
Moreover, $T$ is multiplier if and only if, the leftmost sides of (20) and (21) are equal to
$(T(e_ie_j))_{i,j=1,2,\dots,n} = T(\vv{A})$, if and only if  (19) holds.
\end{proof}

The multiplication table of the opposite algebra $A^{\rm op}$ of $A$ is $\vv{A}^{\rm t}$.  So, the algebras with  multiplication tables transposed to each other have the same (weak) multipliers.% by Proposition 2.2.

\section{Associative algebras}
In this section $A$ is an associative algebra over $K$.

\begin{proposition}
If $A_0 = \{0\}$, then we have
$$M(A) = M'(A) = LM(A) = LM'(A).$$ 
\end{proposition}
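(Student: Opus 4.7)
The plan is as follows. By Corollary 2.4, the hypothesis $A_0 = \{0\}$ yields $M'(A) = LM'(A)$ and $M(A) = LM(A)$ for free, so the statement reduces to proving the identity $M(A) = M'(A)$. One inclusion is immediate from the definitions, so the only real content is to show that every weak multiplier of $A$ is in fact a multiplier.

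To prove this, I would fix $T \in M'(A)$ together with arbitrary $x, y \in A$ and set $w = T(xy) - T(x)y$. The goal is to show $w \in A_0$; since $A_0 = \{0\}$ by hypothesis, this forces $w = 0$, i.e.\ $T(xy) = T(x)y$, and combining with the defining weak-multiplier identity $T(x)y = xT(y)$ then gives the full multiplier condition (2).

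For the inclusion $w \in \mathrm{Ann}_l(A)$, pick any $z \in A$ and rewrite $zT(xy)$ using the weak-multiplier identity on the pair $(z, xy)$, then shift parentheses by associativity, then apply the weak-multiplier identity on $(z, x)$, and reassociate once more; the chain $zT(xy) = T(z)(xy) = (T(z)x)y = (zT(x))y = z(T(x)y)$ gives $zw = 0$. Dually, for any $w' \in A$ the chain $T(xy)w' = (xy)T(w') = x(yT(w')) = x(T(y)w') = (xT(y))w' = (T(x)y)w'$ shows $ww' = 0$, so $w \in \mathrm{Ann}_r(A)$ as well.

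The entire argument is confined to these two short rearrangements, and there is essentially no obstacle: associativity --- available precisely because $A$ is an associative algebra --- is exactly what allows the parentheses in expressions like $T(z)(xy)$ to slide so the weak-multiplier identity can be re-applied to either factor. The only care needed is to keep track of which pair of elements each application of the weak-multiplier identity is being applied to.
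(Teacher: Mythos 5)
Your proof is correct and takes essentially the same route as the paper: both arguments show that $T(xy)$ differs from $xT(y)=T(x)y$ by an element of $\mathrm{Ann}_l(A)\cap\mathrm{Ann}_r(A)=A_0=\{0\}$ via exactly the two associativity-plus-weak-multiplier chains you write down, and both then invoke the linearity statement (Corollary 2.4 / Proposition 2.3) to get $LM$. One cosmetic slip: with the paper's conventions, $zw=0$ for all $z$ places $w$ in $\mathrm{Ann}_r(A)$ while $ww'=0$ for all $w'$ places it in $\mathrm{Ann}_l(A)$, so your two labels are swapped; since you prove both memberships, the conclusion $w\in A_0$ is unaffected.
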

\begin{proof}
Let $T \in M'(A)$,  then we have
$$T(xy)z = xyT(z) = xT(y)z\ \ {\rm and}\ \ zT(xy) = T(z)xy = zT(x)y$$ 
for any $x, y, z \in A$.  
It follows that 
$$T(xy) - xT(y) \in {\rm Ann}_l(A) \cap {\rm Ann}_r(A) = A_0 =\{0\}.$$
Hence, $T(xy) = xT(y)$ and we see $T \in M(A)$.  Moreover, $T \in LM(A)$ by Proposition 2.3.
\end{proof}

Let $a \in A$.  If $xay = axy$ (resp. $xay = xya$) for any $x, y \in A$, $a$ is called a \textit{left} (resp. \textit{right})
\textit{central element}, and $a$ is called a \textit{central element} if $ax = xa$ for any $x \in A$.
Let $Z_l(A)$, (resp. $Z_r(A)$, $Z(A)$) denotes the set of all left central (resp. right central, central) elements.
%We call them the \textit{left center}, the \textit{right center} and the \textit{center} of $A$ respectively.

\begin{lemma}
$Z_l(A)$ (resp. $Z_r(A)$, $Z(A)$) are subalgebra of $A$ containing ${\rm Ann}_l(A)$ (resp. ${\rm Ann}_r(A)$, $A_0$).
\end{lemma}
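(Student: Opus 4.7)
The plan is to handle each of $Z_l(A)$, $Z_r(A)$, and $Z(A)$ in turn, and for each one verify (a) closure under sums and scalar multiples, (b) closure under products, and (c) containment of the relevant annihilator. Steps (a) and (c) are one-line checks, so the only real work is (b), where associativity of $A$ does all the heavy lifting.

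For $Z_l(A)$, closure under linear combinations follows immediately from the linearity of the defining identity $xay=axy$ in $a$, and the containment $\mathrm{Ann}_l(A)\subseteq Z_l(A)$ is clear because if $ay=0$ for every $y$, then both $xay$ and $axy=a(xy)$ vanish. The substantive step is showing $ab\in Z_l(A)$ whenever $a,b\in Z_l(A)$. Here I will compute
\begin{equation*}
x(ab)y = (xa)by = b(xa)y = (bx)ay = a(bx)y = (ab)xy,
\end{equation*}
where the first, third, and fifth equalities are associativity, and the second and fourth equalities use the left-centrality of $b$ (applied with $u=xa$, $v=y$) and of $a$ (applied with $u=bx$, $v=y$) respectively. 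The treatment of $Z_r(A)$ is entirely symmetric: I slide the pair $ab$ to the right instead, using the identity $xay=xya$ twice together with associativity, and the containment $\mathrm{Ann}_r(A)\subseteq Z_r(A)$ follows because $xa=0$ forces both $xay$ and $xya$ to vanish.

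For $Z(A)$, closure under sums and scalar multiples is again immediate, and the product $ab$ is central since $x(ab)=(xa)b=(ax)b=a(xb)=a(bx)=(ab)x$ by associativity and the centrality of $a$ and $b$. Containment $A_0\subseteq Z(A)$ holds because an element of $A_0$ annihilates $A$ on both sides, so $ax=0=xa$ for all $x\in A$, which trivially gives $ax=xa$.

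There is no genuine obstacle; the only point to be careful with is the order of the elementary manipulations in the $Z_l$ and $Z_r$ cases, where one must alternate applications of associativity with applications of the centrality hypothesis, making sure to specify the substitutions for the "$x$" and "$y$" in the defining identity at each step. I will write out the $Z_l$ case in full, note the $Z_r$ case is proved by the obvious symmetric computation, and dispatch $Z(A)$ with the short calculation above.
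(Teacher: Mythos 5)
Your proof is correct, and it is exactly the routine verification the paper has in mind: the paper's proof of this lemma is simply ``Straightforward,'' and your computations (linearity of the defining identities in $a$, the alternating associativity/centrality manipulations for products, and the trivial annihilator containments) are the standard way to fill that in. No discrepancies to report.
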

\begin{proof}
Straightforward.
%It is easy to see that $Z_l(A)$ is a subalgebra of $A$.  If $a \in {\rm Ann}_l(A)$, then
%$ xay = 0 = axy$ for any $x, y \in A$.  Hence, $a \in Z_l(A)$ and we see Ann$_l(A) \subseteq Z_l(A)$.
%The other cases are similar.
\end{proof}

For $a\in A$, $l_a$ (resp. $r_a$) denotes the left (resp. right) multiplication by $a$, that is,
$$ l_a(x) = ax,\ \ \ r_a(x) = xa $$
for $x \in A$.  They are linear mappings. 

\begin{lemma}
For $a \in A$ the following statements are equivalent.

(i)  $l_a$ (resp. $r_a$) is a multiplier, 

(ii)  $l_a$ (resp. $r_a$) is a weak multiplier,

(iii)  $a$ is left (resp. right) central.
%In particular, if $A$ has the identity, then $Z(A) = A_l(A) = Z_r(A)$ and Ann$_l(A) = {\rm Ann}_r = 0$. 
\end{lemma}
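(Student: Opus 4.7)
The plan is to reduce each of the three conditions, by bare-handed use of associativity, to the single equation $xay = axy$. The implication (i)$\Rightarrow$(ii) is immediate, since every multiplier is a weak multiplier by definition. So it suffices to show (ii)$\Leftrightarrow$(iii) and (iii)$\Rightarrow$(i); by the symmetric argument (or by invoking Proposition 2.2 and passing to $A^{\mathrm{op}}$, where $l_a$ becomes $r_a$) the statement for right multiplication follows.

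For (ii)$\Leftrightarrow$(iii), I would unpack the weak multiplier condition $x\,l_a(y) = l_a(x)\,y$ as $x(ay) = (ax)y$, then apply associativity on each side: $x(ay) = (xa)y$ and $(ax)y = a(xy)$. Hence the weak multiplier condition for $l_a$ is equivalent to $(xa)y = a(xy)$ for all $x,y \in A$, which is precisely the defining condition $xay = axy$ of $a$ being left central.

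For (iii)$\Rightarrow$(i), assuming $xay = axy$ for all $x,y$, I would simply compute each of the three terms in the multiplier identity (2) applied to $T = l_a$:
$$x\,l_a(y) = x(ay) = xay,\qquad l_a(xy) = a(xy) = axy,\qquad l_a(x)\,y = (ax)y = axy,$$
all of which agree by the left-central hypothesis. Hence $l_a \in M(A)$.

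There is no real obstacle here; the content is entirely a rewriting using associativity, and the proof should be as brief as that of Lemma 5.2.
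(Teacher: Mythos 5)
Your proof is correct and follows essentially the same route as the paper: the paper likewise closes the cycle by deriving $xay = x\,l_a(y) = l_a(x)\,y = axy$ from the weak multiplier condition to get (iii), and then checks the three terms of the multiplier identity directly from left-centrality, with the trivial implication (i)$\Rightarrow$(ii) left implicit. Your extra remark that the right-hand case can be obtained via $A^{\mathrm{op}}$ and Proposition 2.2 is a fine substitute for the paper's ``the other case is similar.''
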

\begin{proof}
If $l_a$ is a weak multiplier, then
$$ xay = xl_a(y) = l_a(x)y = axy $$ 
for any $x, y \in A$. Hence, $a$ is left central.  Because $l_a(x)y = axy = l_a(xy)$ and $xl_a(y) = xay$ for any $x, y \in A$, $l_a$ is a multiplier
if $a$ is left central.
The other case is similar, and we see that the three statements are equivalent.
\end{proof}

\begin{lemma}
Suppose that $A$ has a right (resp. left) identity. Then, a left (resp. right) central element is central, and
${\rm Ann}_l(A) = \{0\}$ (resp. ${\rm Ann}_r(A) = \{0\}$).
\end{lemma}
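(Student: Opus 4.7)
The plan is to handle the right-identity case directly by substituting the identity into the defining equations, and then obtain the left-identity case by the symmetry principle of Proposition 2.2 (passage to $A^{\rm op}$ exchanges left and right annihilators and left and right central elements).

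So first let $e$ be a right identity of $A$, i.e.\ $xe = x$ for all $x \in A$. To show a left central element $a$ is central, I would apply the defining identity $xay = axy$ of left centrality with the particular choice $y = e$; the right-hand side becomes $ax$ and the left-hand side becomes $xa$, yielding $xa = ax$ for every $x \in A$, which is exactly centrality.

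Next, for $a \in {\rm Ann}_l(A)$, by definition $ax = 0$ for every $x$, so in particular $ae = 0$; but $e$ is a right identity, so $ae = a$, forcing $a = 0$. Hence ${\rm Ann}_l(A) = \{0\}$.

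For the parenthetical (left identity) case, I would just note that if $A$ has a left identity then $A^{\rm op}$ has a right identity; right central elements of $A$ are left central elements of $A^{\rm op}$, and ${\rm Ann}_r(A) = {\rm Ann}_l(A^{\rm op})$, so the already-proven right-identity case applied to $A^{\rm op}$ gives the result. (Alternatively one can repeat the two-line substitution argument with $x = e$ instead of $y = e$.) There is no real obstacle here; the only thing to be careful about is to use the right identity with the \emph{right} variable in the centrality equation so that the identity $xe = x$ actually collapses one side.
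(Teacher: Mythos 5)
Your proof is correct and is exactly the substitution argument the authors have in mind when they dismiss this lemma as ``Easy'': plugging the right identity $e$ into $y$ in $xay=axy$ gives $xa=ax$, and $ae=a$ kills ${\rm Ann}_l(A)$, with the other case following by symmetry (or via $A^{\rm op}$ as in Proposition 2.2). No gaps.
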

\begin{proof}  Easy.
%Let $a \in A$ be a left central element and $e$ be a right identity,  Then for any $x \in A$, we have
%$$ax = axe = xae = xa.$$
%Hence, $a$ is central.  If $a \in {\rm Ann}_l(A)$, then $a = ae = 0$, so Ann$_l(A) = \{0\}$.
%The other case is similar.
\end{proof}

%Let Lm$(A) = \{l_a | a \in Z_l(A) \}$  and Rm$(A) = \{r_a | a \in Z_r(A)$.  By Proposition 1.5,  Lm$(A)$ and Rm$(A)$ are subalgebras of $M(A)$.
Because ${\rm Ann}_l(A) \subseteq Z_l$ and ${\rm Ann}_r(A) \subseteq Z_r$ by Lemma 5.2, we can make the quotient algebras 
$ \bar{Z}_l(A) =  Z_l(A)/ {\rm Ann}_l(A)\ \,\mbox{and}\ \,\bar{Z}_r(A) = Z_r(A)/{\rm Ann}_r(A).  $

\begin{theorem}
Suppose that $A$ has a left (resp. right) identity $e$.  Then,
%$M'(A) = M(A)$ and 
any multiplier is a left (resp. right) multiplication by a left (resp. right) central element and is a linear multiplier.  The mapping $\phi: Z_l(A) \rightarrow M(A)$ sending $a \in Z_l(A)$ to $l_a$
induces an isomorphism $\bar{\phi}: \overline{Z_l}(A) \rightarrow M(A)$ of algebras.  
In particular, if $A$ is unital, $M(A)$ is isomorphic to $Z(A)$.
\end{theorem}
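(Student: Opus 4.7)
The plan is to extract a distinguished element from each multiplier and then identify $M(A)$ with a quotient of $Z_l(A)$. Given any $T \in M(A)$ and setting $a = T(e)$, the multiplier identity (2) applied with $x = e$ yields, for every $y \in A$,
\begin{equation*}
T(y) = T(ey) = T(e)y = ay = l_a(y),
\end{equation*}
so $T = l_a$. This single computation already proves that every multiplier is a left multiplication and is therefore linear. Since $l_a$ is a multiplier, Lemma 5.3 forces $a \in Z_l(A)$.

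Next I would define $\phi : Z_l(A) \to M(A)$ by $\phi(a) = l_a$; Lemma 5.3 makes this well defined. Associativity and bilinearity yield $l_{a+b} = l_a + l_b$, $l_{ka} = k l_a$, and $l_{ab}(x) = a(bx) = l_a(l_b(x))$, so $\phi$ is a $K$-algebra homomorphism. Note that $e \in Z_l(A)$ (both $xey$ and $exy$ equal $xy$), and $\phi(e) = l_e = \mathrm{id}_A$ is the unit of $M(A)$, so $\phi$ is unital. The preceding step shows $\phi$ is surjective, and
\begin{equation*}
\ker \phi = \{a \in Z_l(A) : ax = 0 \text{ for all } x \in A\} = Z_l(A) \cap {\rm Ann}_l(A) = {\rm Ann}_l(A),
\end{equation*}
using ${\rm Ann}_l(A) \subseteq Z_l(A)$ from Lemma 5.2. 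Therefore $\phi$ descends to an algebra isomorphism $\bar{\phi} : \bar{Z}_l(A) \to M(A)$.

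For the unital case, a two-sided identity is in particular a right identity, so Lemma 5.4 yields ${\rm Ann}_l(A) = \{0\}$ and $Z_l(A) = Z(A)$, whence $\bar{Z}_l(A) = Z(A)$ and $M(A) \cong Z(A)$. There is no serious obstacle here: the entire argument hinges on the one-line identity $T(y) = T(ey) = T(e)y$, after which everything reduces to routine bookkeeping with Lemmas 5.2--5.4, and the right-identity variant is symmetric under the opposite-algebra identification of Proposition 2.2.
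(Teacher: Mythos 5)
Your proof is correct and follows essentially the same route as the paper: extract $a = T(e)$, show $T = l_a$ by a one-line identity, and compute $\ker\phi = {\rm Ann}_l(A)$ via Lemmas 5.2--5.4. The only (harmless) difference is that the paper applies the argument to an arbitrary \emph{weak} multiplier, using $T(x) = eT(x) = T(e)x$, thereby also obtaining $M'(A) = M(A) = LM(A)$ for free, whereas your identity $T(y) = T(ey) = T(e)y$ uses the full multiplier condition and so covers only multipliers --- which is all the stated theorem requires.
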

\begin{proof}
Suppose that $A$ has a left identity $e$.  Let $T \in M'(A)$ and set $a = T(e)$.
Then we have
$$ T(x) = eT(x) = T(e)x = ax$$
for any $x \in A$.  Hence, $T = l_a$, where $a \in Z_l(A)$ and $T$ is a linear multiplier by Lemma 5.3.
Therefore, $M'(A) = LM(A)$ and $\phi$ is surjective.  Moreover, for $a \in Z_l(A)$, $\phi(a) = 0$, if and only if 
$ax = 0$ for any $x \in A$, if and only if $a \in {\rm Ann}_l(A)$.
Thus we have Ker$(\phi) = {\rm Ann}_l(A)$, and $\phi$ induces the desired isomorphism.
Similarly, if $A$ has a right identity, $M(A)$ is isomorphic to $\overline{Z_r}(A)$.
Finally, if $A$ has the identity, then Ann$_l(A) = \mbox{Ann}_r(A) = \{0\}$ and hence $M(A)$ is isomorphic to $Z(A)$.
\end{proof}

\section{3-dimensional associative  algebras}
%In this and next sections $K$ is an algebraically closed field of characteristic not equal to 2.
Over an algebraically closed field $K$ of characteristic not equal to 2, we have, up to isomorphism, 
24 families of associative algebras of dimension 3.   They are
5 unital algebras $U_0, U_1, U_2$, $U_3, U_4$ defined on basis $E = \{e,f,g\}$ by\vspace{-1mm}
$$\small{
\begin{pmatrix}
e&f&g\\f&0&0\\g&0&0
\end{pmatrix}, \ 
\begin{pmatrix}
e&f&g\\f&0&f\\g&-f&e
\end{pmatrix}, \ 
\begin{pmatrix}
e&0&0\\0&f&0\\0&0&g
\end{pmatrix}, \ 
\begin{pmatrix}
e&0&0\\0&f&g\\0&g&0
\end{pmatrix}, \ 
\begin{pmatrix}
e&f&g\\f&g&0\\g&0&0
\end{pmatrix}},
$$
%respectively,
5 curled algebras $C_0, C_1, C_2, C_3, C_4$
defined by\vspace{-1mm}
$$\small{
\begin{pmatrix}
0 & 0 & 0 \\
0 & 0 & 0 \\
0 & 0 & 0
\end{pmatrix}, \ 
\begin{pmatrix}
0 & 0 & 0 \\
0 & 0 & e \\
0 & -e & 0
\end{pmatrix}, \
\begin{pmatrix}
{0} & {0} & {0} \\
{e} & {f} & {0} \\
{0} & {g} & {0}
\end{pmatrix}, \ 
\begin{pmatrix}
{0} & {0} & {0} \\
{0} & {0} & {0} \\
{e} & {f} & {g}
\end{pmatrix}, \
\begin{pmatrix}
{0} & {0} & {e} \\
{0} & {0} & {f} \\
{0} & {0} & {g}
\end{pmatrix},}
$$ 
%respectively,
non-unital 4 straight algebras $S_1, S_2, S_3, S_4$ defined by\vspace{-1mm}
$${\small
\begin{pmatrix}
{f}\ & {g} & {0} \\
{g}\ & {0} & {0} \\
{0}\ & {0} & {0} 
\end{pmatrix},\ 
\begin{pmatrix}
{e}\ & {0} & {0} \\
{0}\ & {g} & {0} \\
{0}\ & {0} & {0} 
\end{pmatrix},\ 
\begin{pmatrix}
{e}\ & {0} & {0} \\
{0}\ & {f} & {0} \\
{0}\ & {0} & {0} 
\end{pmatrix},\ 
\begin{pmatrix}
{e}\ & {f} & {0} \\
{f}\ & {0} & {0} \\
{0}\ & {0} & {0}  
\end{pmatrix},}\vspace{-1mm}
$$
%respectively,
and non-unital 10 families of waved algebras $W_1$, $W_2$, 
$W_4$, $W_5$, $W_6$, $W_7$, $W_8$, $W_9$, $W_{10}$ and
$\big\{W_3(k)\big\}_{k \in H}$
defined by\vspace{-1mm}
$${\small
\begin{pmatrix}
{0} & {0} & {0} \\
{0} & {0} & {0} \\
{0} & {0} & {e}
\end{pmatrix},\ 
\begin{pmatrix}
{0} & {0} & {0} \\
{0} & {0} & {0} \\
{0} & {e} & {0}
\end{pmatrix},\ 
\begin{pmatrix}
{e} & {0} & {0} \\
{0} & {0} & {0} \\
{0} & {0} & {0}
\end{pmatrix},\ 
\begin{pmatrix}
{0} & {0} & {0} \\
{0} & {0} & {0} \\
{0} & {f} & {g}
\end{pmatrix},\ 
\begin{pmatrix}
{0} & {0} & {0} \\
{0} & {0} & {f} \\
{0} & {0} & {g}
\end{pmatrix},}
$$\vspace{-2mm}
$$\small{
\begin{pmatrix}
e & 0 & 0 \\
0 & 0 & 0 \\
0 & f & g
\end{pmatrix},\ 
\begin{pmatrix}
{e} & {0} & {0} \\
{0} & {0} & {f} \\
{0} & {0} & {g}
\end{pmatrix},\ 
\begin{pmatrix}
{0} & {e} & {0} \\
{e} & {f} & {0} \\
{0} & {g} & {0}
\end{pmatrix},\ 
\begin{pmatrix}
{0} & {e} & {0} \\
{e} & {f} & {g} \\
{0} & {0} & {0}
\end{pmatrix}
\ \mbox{and}\ 
\begin{pmatrix}
{0} & {0} & {0} \\
{0} & {e} & {0} \\
{0} & k{e} & {e}
\end{pmatrix},}
$$
respectively, where $H$ is a subset of $K$ such that $K = H\cup -H$ and $H\cap-H = \{0\}$ (see \cite{KSTT} for details).
% Over $\mathbb{R}$ three families are added.
We determine the (weak) multipliers of them below.
%Let $A$ be a 3-dimensional algebra with basis $\{e, f, g\}$ over $K$.
% and as in Section 4, $A$ also denotes the multiplication table of $A$ on this basis.
%let $T \in L(A)$ be represented by
%$\begin{pmatrix}a&b&c\\p&q&r\\x&y&z\end{pmatrix}$ on this basis.

\smallskip
(\small{0})  $A = C_0$ is the zero algebra, so by Proposition 2.6, we have\vspace{-1mm}
$$M'(A)  = {A}^{A}, \ \,M(A)  = \{T \in {A}^{A}\,|\, T(0) = 0 \}$$\vspace{-1mm}
and \vspace{-1mm}
$$LM(A) = LM'(A) = L(A).$$

(i)  The unital algebras $U_0, U_2$, $U_3, U_4$ are commutative, so for such $A$ we have 
$$M(A) = LM(A) = M'(A) = LM'(A) = \{ l_x | x \in A \} \cong A$$
by Theorem 5.5.  For $A = U_1$, we have
$$M(A) = LM(A) = M'(A) = LM'(A) \cong Z(A) = Ke.$$

(ii)  For $A = C_1$,
% = \begin{pmatrix}0&0&0\\0&0&e\\0&-e&0\end{pmatrix}$:  
We have $A_0 = \mbox{Ann}_l(A) = \mbox{Ann}_r(A) =Ke$, 
and a nihil decomposition $A = A_1 \oplus A_0$ with $A_1 = Kf + Kg$.
Let $T_1 \in M'_1(A)$, then by Theorem 3.1, $T_1$ is a linear mapping such that $T_1(Ke) = \{0\}$.  
Let 
\begin{equation}T_1 = \begin{pmatrix}0&0&0\\0&q&r\\0&t&u\end{pmatrix}\end{equation}
with $q,r,t,u \in K$ be the representation matrix of $T_1$ on $E$.
By Theorem 4.2, $T_1$ is a weak multiplier, if and only if
$$\begin{pmatrix}0&0&0\\0&te&ue\\0&-qe&-re\end{pmatrix} = \vv{A}T_1 = T_1^{\rm t}\vv{A}
= \begin{pmatrix}0&0&0\\0&-te&qe\\0&-ue&re\end{pmatrix},$$
if and only if $r  = t = 0$ and $q = u$.
Hence, $M'_1(A) = \{T_q\,\big|\,q \in K\}$, where $T_q  = \small{\begin{pmatrix}0&0&0\\0&q&0\\0&0&q\end{pmatrix}}$.  By Theorem 3.1 we see \vspace{-1mm}
$$M'(A) = \{T_q\,\big|\,q \in K\} \oplus (Ke)^A,$$
and
$$LM'(A) = \left\{\small{\begin{pmatrix}a&b&c\\0&q&0\\0&0&q\end{pmatrix}}\,
\Big|\,a,b,c,q \in K\right\}.$$
%$$LM'_1(A) = \left\{\small{\begin{pmatrix}a&\hspace{-1mm}b&\hspace{-1mm}c\\0&\hspace{-1mm}q&\hspace{-1mm}0\\0&\hspace{-1mm}0&\hspace{-1mm}q\end{pmatrix}}\,
%\Big|\,a,b,c,q \in K\right\}.$$
By the multiplication table of $A$, we have\ \,$\alpha\beta = (xv-yz)e$\ \,
for $\alpha = xf+yg, \beta = zf+vg \in A_1$ with $x,y,z,v \in K.$ 
By Corollary 3.2, $T \in M'(A)$ is given by $T = T_q + R$ with $R \in (Ke)^A$ and
%$T_h(xe+yf+zg) = h(x,y,z)e$.  
this $T$ is a multiplier, if and only if
\begin{eqnarray*}
R((xv-yz)e) &=& R(\alpha\beta)\ =\ \alpha T_q(\beta) - T_q(\alpha\beta)\\ &=& \alpha(q\beta) - T_q((xv-yz)e)\ =\ q(xv-yz)e
\end{eqnarray*}
for any $\alpha$ and $\beta$,
%\begin{eqnarray*}
%R((xv-yz)e) &=& R((xf+yg)(zf+vg))\\
%&=& (xf+yg)T_q(zf+vg) - T_q((xf+yg)(zf+vg))\\
%&=& q(xf+yg)(zf+wg) - T_q((xv-yz)e)\\
%&=& q(xv-yz)e
%\end{eqnarray*}
%for all $x,y,z,v \in K$, 
if and only if $R(xe) = qxe$ for all $x \in K$.
Let $S_q = \small{\begin{pmatrix}q&0&0\\0&q&0\\0&0&q\end{pmatrix}}$ be the scalar multiplication by $q \in K$.
Then, we see $(T-S_q)(A) \subseteq A_0 = Ke$ %S_q \in LM(A)$ by Proposition 3.3, 
and 
$$(T-S_q)(xe) = T_q(xe) + R(xe) - S_q(xe) = 0 + qxe -qxe = 0,$$
for any $x \in K$, that is, $(T - S_q)(Ke)= \{0\}$.  Thus, we have
$$M(A) = \{S_q\,\big|\,q \in K\} \oplus \{R \in (Ke)^A\,|\,R(Ke) = \{0\}\},$$
and
$$LM(A) = \left\{\small{\begin{pmatrix}a&b&c\\0&a&0\\0&0&a\end{pmatrix}}\,\Big|\,a,b,c \in K\right\}.$$

(iii)  $A = C_2$: %= \begin{pmatrix}0&0&0\\e&f&0\\0&g&0\end{pmatrix}$:
Because Ann$_l(A) = Ke$ and Ann$_r(A) = Kg$, we see
$A_0 = \{0\}$.  Hence, any weak multiplier $T$ is a linear multiplier by Proposition 5.1.  By Theorem 4.2, 
\begin{equation}T =\begin{pmatrix}a&b&c\\p&q&r\\s&t&u\end{pmatrix}\end{equation}
is a (weak) multiplier, if and only if
$$\begin{pmatrix}0&0&0\\ae+pf&be+qf&ce+rf\\pg&qg&rg\end{pmatrix} = \vv{A}T = T^{\rm t}\vv{A}
= \begin{pmatrix}pe&pf+sg&0\\qe&qf+tg&0\\re&rf+ug&0\end{pmatrix},$$
if and only if $b = c = p = r = s = t = 0$ and $a = q = u$,
that is, $T$ is the scalar multiplication $S_a$ by $a$.
%This $T$ is the scaler multiplication by $a$, and is a multiplier. 
%because
%$$T(A) = \begin{pmatrix}0&0&0\\ae&af&0\\0&ag&0\end{pmatrix}
%= AT = T^{\rm t}A.$$  
Consequently,
$$M(A) = M'(A) = LM(A) = LM'(A) = \{S_a\,\big|\,a \in K\} \cong K.$$
%\left\{\small{\begin{pmatrix}a&0&0\\0&a&0\\0&0&a\end{pmatrix}}\,\Big|\,a \in K\right\}.$$

(iv)  $C_3$ and $C_4$ are opposed to each other, and have the same (weak) multipliers by Proposition 2.2.  Let $A = C_3$, % = \begin{pmatrix}0&0&0\\0&0&0\\e&f&g\end{pmatrix}$.  
then, $A$ has a left identity $g$, $Z_l(A) = A$ and Ann$_l(A) = Ke+Kf$.  Hence, by Theorem 5.4, 
$$M(A) = M'(A) = LM(A) = LM'(A) = A/(Ke+Kg) = \{S_a\,\big|\,a \in K\}.$$%\left\{\begin{pmatrix}a&0&0\\0&a&0\\0&0&a\end{pmatrix}\,\Big|\,a \in K\right\}.$$

%$T$ is a weak multiplier, if and only if
%$$\begin{pmatrix}0&0&0\\0&0&0\\ae+pf+xg&be+qf+yg&ce+rf+zg\end{pmatrix}
%= \begin{pmatrix}xe&xf&xg\\ye&yf&yg\\ze&zf&zg\end{pmatrix},$$
%if and only if $b = c = p = r = x = y  = 0$, $a = q = z$.
%This $T$ is also a multiplier.
%(5)  $A = C_4$:  $g$ is a right identity.

(v)  $A = S_1$: % = \begin{pmatrix}f&g&0\\g&0&0\\0&0&0\end{pmatrix}$:
We have $A_0 = \mbox{Ann}_l(A) = \mbox{Ann}_r(A) = Kg$, and
$A = A_1 \oplus A_0$ with $A_1 = Ke + Kf$.
Then, $T_1 \in M'_1(A)$ is a linear mapping with $T(Kg) = \{0\}$.  Let 
\begin{equation}T_1 = \begin{pmatrix}a&b&0\\p&q&0\\0&0&0\end{pmatrix}\end{equation} 
be its representation on $E$. 
$T_1$ is a weak multiplier, if and only if
$$\begin{pmatrix}af+pg&bf+qg&0\\ag&bg&0\\0&0&0\end{pmatrix} = \vv{A}T_1 = T_1^{\rm t}\vv{A}
= \begin{pmatrix}af+pg&ag&0\\bf+qg&bg&0\\0&0&0\end{pmatrix},$$
if and only if $b = 0$ and $a = q$.  Hence,
$$M'(A) = \left\{T_1^{a,p}\,|\,a,p \in K\right\} \oplus (Kg)^A,$$
where $T_1^{a,p} = \small{\begin{pmatrix}a&0&0\\p&a&0\\0&0&0\end{pmatrix}}$.
So, $T \in M'(A)$ is written as
$T = T_1^{a,p} + R$ with $R \in (Kg)^A$, and 
this $T$ is multiplier, if and only if
\begin{eqnarray*}
R(xzf+(xv+yz)g) &=& R(\alpha\beta)\ =\ \alpha T_1^{a,p}(\beta) - T_1^{a,p}(\alpha\beta)\\
&=& \alpha(aze+(pz+av)f) - T_1^{a,p}(xzf+(xv+yz)g)\\
&=& axzf+(pxz+axv+ayz)g - axzf\\
&=& (pxz+a(xv+yz))g
\end{eqnarray*}
for any $\alpha = xe+yf, \beta = ze+vf \in A_1$ with $x,y,z,v \in K$,
%\begin{eqnarray*}
%&&R(xzf+(xv+yz)g) = R((xe+yf)(ze+vf))\\
%&=& (xe+yf)T_1^{a,p}(ze+vf) - T_1^{a,p}((xe+yf)(ze+vf))\\
%&=& (xe+yf)(aze+(pz+av)f) - T_1^{a,p}(xzf+(xv+yz)g)\\
%&=& axzf+(pxz+axv+ayz)g - axzf\\
%&=& (pxz+a(xv+yz))g
%\end{eqnarray*}
%for all $x,y,z,v \in K$, 
if and only if \ \,$R(xf+yg) = (px+ay)g$\ \,for all $x, y \in K$.  Let
$T^{a,p} = \small{\begin{pmatrix}a&0&0\\p&a&0\\0&p&a\end{pmatrix}}$, then $(T-T^{a,p})(A) \subseteq Kg$, and %we see $(T-T^{a,p})(xf+yg) = 0$
\begin{eqnarray*}
(T-T^{a,p})(xf+yg) &=& \left(T_1^{a,p}+R-T^{a,p}\right)(xf+yg)\\
&=& axf + (px+ay)g - (axf+pxg+ayg) = 0.
\end{eqnarray*}
for any $x, y \in K$.  Thus, $(T - T^{a,p})(Kf+Kg) = \{0\}$, and hence
$$M(A) = \{T^{a,p}\,|\,a, p \in K\} \oplus \{R \in (Kg)^A\,|\,R(Kf+Kg) = \{0\}\}.$$
Taking the intersections of $M'(A)$ and $M(A)$ with $L(A)$, we obtain
%\begin{eqnarray}
%T(A) = \begin{pmatrix}af+yg&zg&0\\zg&0&0\\0&0&0\end{pmatrix}
%= \begin{pmatrix}af+pg&ag&0\\ag&bg&0\\0&0&0\end{pmatrix},\end{eqnarray}
%if and only if $p = y, a = z$  We have
$$LM'(A) = \left\{\small{\begin{pmatrix}a&0&0\\p&a&0\\s&t&u\end{pmatrix}}\,\Big|\,a,p,s,t,u \in K\right\}$$
and
$$LM(A) = \left\{\small{\begin{pmatrix}a&0&0\\p&a&0\\s&p&a\end{pmatrix}}\,\Big|\,a,p,s \in K\right\}.$$

(vi)  $A = S_2$: %= \begin{pmatrix}e&0&0\\0&g&0\\0&0&0\end{pmatrix}$: 
We have $A_0 = \mbox{Ann}_l(A) = \mbox{Ann}_r(A) = Kg$, and
$A = A_1 \oplus A_0$ with $A_1 = Ke + Kf$.
Let a linear mapping $T_1 \in M'_1(A)$ be represented as (24), then
% $T_1 = \begin{pmatrix}a&b&0\\p&q&0\\0&0&0\end{pmatrix}$.  %By Theorem 4.2,
$T_1$ is a weak multiplier, if and only if
$$\begin{pmatrix}ae&be&0\\pg&qg&0\\0&0&0\end{pmatrix}
= \vv{A}T = T^{\rm t}\vv{A} = \begin{pmatrix}ae&pg&0\\be&qg&0\\0&0&0\end{pmatrix},$$
if and only if $b = p = 0$.  Hence,
$$M'(A) = \{T_1^{a,q}\,\big|\,a,q \in K\}\oplus (Kg)^A,$$
where $T_1^{a,q} = \small{\begin{pmatrix}a&0&0\\0&q&0\\0&0&0\end{pmatrix}}$,
and
$$LM'(A) = \left\{\small{\begin{pmatrix}a&0&0\\0&q&0\\s&t&u\end{pmatrix}}\,\Big|\,a,q,s,t,u \in K\right\}.$$
By Corollary 3.2, a weak multiplier $T$ written as
$T = T_1^{a,q} + R$ for $a,q \in K$ and $R \in (Kg)^A$ is multiplier, if and only if 
\begin{eqnarray*}
R(xze+yvg) &=& R(\alpha\beta)\ =\ \alpha T_1^{a,q}(\beta) -T_1^{a,q}(xze+yvg)\\
&=& (xe+yf)(aze+qvf) - axze\\%-(bxu+qyv)f\\
&=& yqvg,
\end{eqnarray*}
for any $\alpha = xe+yf, \beta = ze+vf \in A_1$ with $x,y,z,v \in K$,
%\begin{eqnarray*}
%R(xue+yvg) &=& R((xe+yf)(ue+vf))\\
%&=& (xe+yf)T_1^{a,b,q}(ue+vf) -T_1^{a,b,q}(xue+yvg)\\
%&=& (xe+yf)((au+bv)e+(bu+qv)f) - axue - bxuf\\%-(bxu+qyv)f\\
%&=& bxve - bxuf + y(bu+qv)g,
%\end{eqnarray*}
if only if $R(xe+yg) = qyg$ for all $x, y \in K$.
Let $T^{a,q} = \small{\begin{pmatrix}a&0&0\\0&q&0\\0&0&q\end{pmatrix}}$, then $T^{a,q} \in M(A)$ and we have $(T-T^{a,q})(xe+yg) = 0$
for any $x, y \in K$ in the same way as (v) above.
%\begin{eqnarray*}
%(T-T^{a,q})(xe+yg) &=& \left(T_1^{a,0,q}+R-T^{a,q}\right)(xe+yg)\\
%&=& axe + qyg - (axe+qyg) = 0
%\end{eqnarray*}
%for any $x, y \in K$,
Hence, $(T-T^{a,q})(Ke+Kg) = \{0\}$, and we have
$$M(A) = \{T^{a,p}\,|\,a,p \in K\} \oplus \{R \in (Kg)^A\,|\,R(Ke+Kg) = \{0\}\}$$
%$$LM'(A) = \left\{\begin{pmatrix}a&b&0\\b&p&0\\x&y&z\end{pmatrix}\,\Big|\,a,b,p,x,y,z\in K\right\}$$
and
\small{$$LM(A) = \left\{\small{\begin{pmatrix}a&0&0\\0&p&0\\0&t&0\end{pmatrix}}\,\Big|\,a,p,t \in K\right\}.$$

\begin{comment}
With the subalgebras $B = Ke$ and $C = Kf+Kg$, $A$ is the direct product of them.  We have
$B_0 = \{0\}$.  $C$ is isomorphic to $A^{(2)}_4$ and $C_0 = Kg$.  By theorem 3.1, $T \in A^A$ is a weak multiplier if and only if
$T_{BB} = (a)$ for some $a \in K$, $T_{CC} = T'_q + T'_h$ for some $q \in K$,
$$T_B(xe+yf+zg) = T_B(xe) =T_{BB}(xe) = axe$$ and
$$T_C(xe+yf+zg) \cong T_{CC}(yf+zg) = byf + h(y,z)g \cong byf\  (\mbox{mod}\ Kg).$$ Hence,
$$T(xe+yf+zg) = axe + byf + h(x,y,z)g.$$
for some $h \in K^{K^3}$.  So,
$$M'(A) = \{T_a\,|\,a \in K\} \oplus \{T_q\,|\,q \in K\} \oplus (Kg)^A.$$
Because $B^2 = B$ and $C^2 = Kg$,
The above $T$ is a multiplier if and only if
$$T_C(xe+zq) = T_{CC}(zq) = h(0,0,z)g.$$ 
\end{comment}
%(7)  $A = S_3$:
%$T$ is a weak multiplier, if and only if
%$$\begin{pmatrix}ae&be&ce\\pf&qf&rf\\0&0&0\end{pmatrix}
%= \begin{pmatrix}ae&pf&0\\be&qf&0\\ce&rf&0\end{pmatrix},$$
%if and only if $b = c = p = r = 0$.

(vii)  $A = S_3$: %= \begin{pmatrix}e&0&0\\0&f&0\\0&0&0\end{pmatrix}$: 
We have $A_0 = Kg$ and $A = A_1 \oplus A_0$ with $A_1 = Ke+Kf$.  Since $A_1$ is a subalgebra of $A$, by Theorem 3.1 we obtain 
the equalities (10) in Section 3.
%\begin{equation}
%M'(A) = M'(A_1) \oplus (A_0)^A\ \,\mbox{and}\ \,M(A) = M(A_1) \oplus \{T \in (A_0)^A\,|\,T(A^2) = 0\}.
%\end{equation}
Because $A_1$ is a commutative unital algebra, 
$$M(A_1) = M'(A_1) = A_1 = \left\{\small{\begin{pmatrix}a&0\\0&b\end{pmatrix}}\,\Big|\,a, b \in K\right\}$$
by Theorem 5.5.  Hence,
$$
M'(A) = A_1 \oplus (Kg)^A\ \,\mbox{and}\ \,
M(A) = A_1 \oplus \{T \in (Kg)^A\,|\,T(Ke+Kf) = 0\}.
$$
Intersecting with $L(A)$ we have
$$ LM'(A) = \left\{\small{\begin{pmatrix}a&0&0\\0&b&0\\s&t&u\end{pmatrix}}\,\Big|\,a,b,s,t,u \in K\right\}\ 
{\rm and}\ \ 
LM(A) = \left\{\small{\begin{pmatrix}a&0&0\\0&b&0\\0&0&u\end{pmatrix}}\,\Big|\,a,b,u \in K\right\}.$$

(viii)  $A = S_4$: %= \begin{pmatrix}e&f&0\\f&0&0\\0&0&0\end{pmatrix}$: 
We have $A = A_1 \oplus A_0$ with $A_0 = Kg$ and $A_1 = Ke+Kf$. 
Because $A_1$ is a commutative unital subalgebra of $A$, similarly to above we have
$$
M'(A) = %M'(A_1) \oplus (A_0)^A = 
A_1 \oplus (Kg)^A 
= \left\{\small{\begin{pmatrix}a&0\\b&a\end{pmatrix}}\,\Big|\,a, b \in K\right\} \oplus (Kg)^A,
$$
\begin{eqnarray*}
M(A)&=&A_1 \oplus \left\{T \in (Kg)^A\,|\,T(A^2) = 0\right\}\\
&=&\left\{\small{\begin{pmatrix}a&0\\b&a\end{pmatrix}}\,\Big|\,a, b \in K\right\} \oplus \{T \in (Kg)^A\,|\,T(Ke+Kf) = 0\},
\end{eqnarray*}
$$ LM'(A) = \left\{\small{\begin{pmatrix}a&0&0\\b&a&0\\s&t&u\end{pmatrix}}\,\Big|\,a,b,s,t,u \in K\right\}\ {\rm and}\ \ 
LM(A) = \left\{\small{\begin{pmatrix}a&0&0\\b&a&0\\0&0&u\end{pmatrix}}\,\Big|\,a,b,u \in K\right\}.$$

(ix)  $A = W_1:$ %= \begin{pmatrix}{0} & {0} & {0} \\{0} & {0} & {0} \\{0} & {0} & {e}\end{pmatrix}$:  
We have
$A = A_1 \oplus A_0$ with $A_0 = Ke+Kf$ and $A_1 = Kg$.
Let $T_1  \in M'_1(A)$, then $T_1$ is a linear mapping with $T_1(A_0) = \{0\}$.  So $T_1$ is determined by $T_1(g) = ag$ with $a \in K$.
Let denote this $T_1$ by $T^a_1$.  We have
$$M'(A) = \{T^a_1\,|\,a \in K\} \oplus (Ke+Kf)^A.$$
A weak multiplier $T = T^a_1 + R$ with $R \in (Ke+Kf)^A$ is a multiplier,
if and only if
$$ R(xye) = R((xg)(yg)) = xgT^a_1(yg) - T^a_1(xye) =axye$$
for all $x, y \in K$, if and only if $R(xe) = axe$ for any $x\in K$. 
Let $T_a = \small{\begin{pmatrix}a&0&0\\0&0&0\\0&0&a\end{pmatrix}}$.  Then,
%$(T - T_a)(xe) = (T'_a + R - T_a)(xe) = 0$ for any $x \in K$,
$(T-T_a)(Ke) = \{0\}$ and it follows that
$$ M(A) = \{T_a\,\big|\,a\in K\} \oplus \{R \in (Ke+Kf)^A\,\big|\, R(Ke) = \{0\}\}.$$
Also we have
$$LM'(A) = \left\{\small{\begin{pmatrix}a&b&c\\p&q&r\\0&0&u\end{pmatrix}}\,\Big|\,a,b,c,p,q,r,u \in K\right\}$$
and
$$LM(A) = \left\{\small{\begin{pmatrix}a&b&c\\0&q&r\\0&0&a\end{pmatrix}}\,\Big|\,a,b,c,q,r \in K\right\}.$$

(x)  $A = W_2$ \footnote{This is the algebra taken up in \cite{Z}}: %= \begin{pmatrix}0&0&0\\0&0&0\\0&e&0\end{pmatrix}$:
We have $A = A_1 \oplus A_0$ with $A_0 = Ke$ and $A_1 =  Kf+Kg$.
$T \in M'_1(A)$ is a linear mapping with $T(Ke) = \{0\}$.
Let $T$ be represented as (22), then %$\begin{pmatrix}0&0&0\\0&q&r\\0&t&u\end{pmatrix}$ with $q,r,t,u \in K$.
$T$ is a weak multiplier if and only if
$$\begin{pmatrix}0&0&0\\0&0&0\\0&qe&re\end{pmatrix} = \vv{A}T = T^{\rm t}\vv{A}
= \begin{pmatrix}0&0&0\\0&te&0\\0&ue&0\end{pmatrix},$$
if and only if $r = t = 0$, $q = u$.  Hence,
$$M'(A) = \{T_q\,|\,q \in K\}\oplus (Ke)^A,$$
where $T_q = \small{\begin{pmatrix}0&0&0\\0&q&0\\0&0&q\end{pmatrix}}$.  So,
$$LM'(A) = \left\{\small{\begin{pmatrix}a&b&c\\0&q&0\\0&0&q\end{pmatrix}}\,\Big|\,a,b,c,q \in K\right\}.$$
A weak multiplier $T = T_q+R$ with $R \in (Ke)^A$ is a multiplier,
if and only if
\begin{eqnarray*}
R(yze) = R(\alpha\beta) = \alpha T_q(\beta) - T_q(yze)
= \alpha(q\beta) = qyze
\end{eqnarray*}
for any $\alpha = xf+yg, \beta = zf+vg \in A_1$ with $x,y,z,v \in K$, if and only if $R(xe) = qxe$ for all $x \in K$.
Let $S_a$ be the scalar multiplication by $a \in K$.  Then, 
%$(T -S_a)(xe) = 0 
%(T_a + R -S_a)(xe) = 0$ 
%for any $x \in K$. 
$(T-S_a)(Ke) = \{0\}$, and
hence,
$$M(A) = \{S_a\,|\,a \in K\} \oplus \{R \in (Ke)^A\,|\,R(Ke) = \{0\}\}$$
and
$$LM(A) = \left\{\small{\begin{pmatrix}a&b&c\\0&a&0\\0&0&a\end{pmatrix}}\,\Big|\,a,b,c \in K\right\}.$$

(xi)  $A = W_4$: %\begin{pmatrix}{e} & {0} & {0} \\{0} & {0} & {0} \\{0} & {0} & {0}\end{pmatrix}$: 
 We have
$A = A_1 \oplus A_0$ with $A_0 = Kf + Kg$ and $A_1 = Ke$.
Because $A_1$ is a subalgbra isomorphic to the base field $K$, for $T_1^a \in M(A_1)$ with $a \in K$ given by $T_1^a(e) = ae$, we see
$$M'(A) = \{T_1^a\,|\,a \in K\} \oplus (fK+gK)^A$$
and
$$ M(A) = \{T_1^a\,|\,a \in K\} \oplus \{R \in (fK+gK)^A\,|\,R(Ke) = 0\}$$
by Theorem 3.1.  Taking the intersection with $L(A)$ we have
$$LM'(A) = \left\{\small{\begin{pmatrix}a&0&0\\p&q&r\\s&t&u\end{pmatrix}}\,\Big|\,a,p,q,r,s,t,u \in K\right\}$$
and
$$LM(A) = \left\{\small{\begin{pmatrix}a&0&0\\0&q&r\\0&t&u\end{pmatrix}}\,\Big|\,a,q,r,t,u \in K\right\}.$$

(xii)  $W_5$ and $W_6$ are opposed.  Let $A = W_5$, then
%\begin{pmatrix}{0} & {0} & {0} \\{0} & {0} & {0} \\{0} & {f} & {g}\end{pmatrix}.$  
$A = A_1\oplus A_0$ with $A_0 = Ke$ and $A_1 = Kf+Kg$.  Since $A_1$ is a subalgebra of $A$, we have the equalities (10).  Because $A_1$ has a left identity $g$, we have
\begin{equation*}
M(A_1) = LM(A_1) = M'(A_1) = LM'(A_1) \cong (A_1)/Kf \cong Kg
\end{equation*}
by Theorem 5.5.
%isomorphic to $A^{(2)}_1$,
So, any element in $M(A_1)$ 
is a scalar multiplication $S_1^q$ in $A_1$ by $q \in K$.
By Theorem 3.1 we have
$$M'(A) = \{S_1^q\,|\,q \in K\} \oplus (Ke)^A,$$
$$ M(A) = \{S_1^q\,|\,q \in K\} \oplus \{R \in (Ke)^A\,|\,R(Kf+Kg) = 0\},$$
$$LM'(A) = \left\{\small{\begin{pmatrix}a&b&c\\0&q&0\\0&0&q\end{pmatrix}}\,\Big|\,a,b,c,q \in K\right\}\ \,
{\rm and}\ \,
LM(A) = \left\{\small{\begin{pmatrix}a&0&0\\0&q&0\\0&0&q\end{pmatrix}}\,\Big|\,a,q \in K\right\}.$$

(xiii)  $W_7$ and $W_8$ are opposed.  Let $A = W_7$. %= \begin{pmatrix}e&0&0\\0&0&0\\0&f&g\end{pmatrix}.$
We see $A_0 = \mbox{Ann}_r(A) = \{0\}$.  Hence, any weak multiplier is a linear multiplier by Proposition 5.1, and
$T$ represented as (23) is a weak multiplier, if and if
$$
\begin{pmatrix}ae&be&ce\\0&0&0\\pf+sg&qf+tg&rf+ug\end{pmatrix}
= \vv{A}T = T^{\rm t}\vv{A} =
\begin{pmatrix}ae&sf&sg\\be&tf&tg\\ce&uf&ug\end{pmatrix},
$$
if and only if
$b = c = p = r = s = t = 0, q = u$.
% This $T$ is a multiplier because
%$$T(\vv{A}) = \begin{pmatrix}ae&0&0\\0&0&0\\0&qf&qg\end{pmatrix} = \vv{A}T.$$
Therefore,
$$M(A) = LM(A) = M'(A) = LM'(A) = LM'(A) = \left\{\small{\begin{pmatrix}a&0&0\\0&q&0\\0&0&q\end{pmatrix}}\,\Big|\,a,q \in K\right\}.$$
% = \begin{pmatrix}ae&0&0\\0&0&0\\0&qf&qg\end{pmatrix}$$
%$x = y = 0$
%$$LM'(A) = \left\{\begin{pmatrix}a&0&0\\0&q&0\\0&0&q\end{pmatrix}\,\Big|\,a,q \in K\right\}$$

(xiv)  $W_9$ and $W_{10}$ are opposed.  Let $A = W_9$. %= \begin{pmatrix}0&e&0\\e&f&0\\0&g&0\end{pmatrix}$. 
Then, because $A_0 = \mbox{Ann}_l(A) = \{0\}$, any weak multiplier is a linear multiplier and a linear mapping $T$ represented as (23) 
% = \begin{pmatrix}a&b&c\\p&q&r\\x&y&z\end{pmatrix}$ 
is a weak multiplier if and only if
$$\begin{pmatrix}pe&qe&re\\ae+pf&be+qf&ce+rf\\pg&qg&rg\end{pmatrix} 
= \vv{A}T = T^{\rm t}\vv{A} = \begin{pmatrix}pe&ae+pf+sg&0\\qe&be+qf+tg&0\\re&ce+rf+ug&0\end{pmatrix}
$$
$c = p = r = s = t = 0, a = q = u$.  
%This $T$ is a multiplier because
%$$
%T(\vv{A}) = \begin{pmatrix}0&ae&0\\ae&be+af&0\\0&ag&0\end{pmatrix}.
%$$
Therefore,
$$LM(A) = M(A) = LM'(A) = M'(A) = \left\{\small{\begin{pmatrix}a&b&0\\0&a&0\\0&0&a\end{pmatrix}}\,\Big|\,a,b \in K\right\}.$$

(xv)  $A = W_3(k)$. %= \begin{pmatrix}0&0&0\\0&e&0\\0&ke&e\end{pmatrix}$:  
We have $A = A_1 \oplus A_0$ with $A_0 = Ke$ and $A_1 =  Kf+Kg$.
$T \in M'_1(A)$ is a linear mapping with $T(Ke) = \{0\}$.
Let $T$ be represented as (22), then %by $\begin{pmatrix}q&r\\y&z\end{pmatrix}$ with $q,r,y,z \in K$.
$T$ is a weak multiplier if and only if
$$\begin{pmatrix}0&0&0\\0&qe&re\\0&(kq+t)e&(kr+u)e\end{pmatrix}
= \vv{A}T = T^{\rm t}\vv{A} = \begin{pmatrix}0&0&0\\0&(q+kt)e&te\\0&(r+ku)e&ue\end{pmatrix}.$$
If $k = 0$, then the above holds if and only if $r = t$,
and otherwise it holds if and only if $r = t = 0, q =u$.  Thus,
$$M'(A) = \{T_1^{q,r,u}\,\big|\,q,r,u \in K\} \oplus (Ke)^A,\ LM'(A) = \left\{\small{\begin{pmatrix}a&b&c\\0&q&r\\0&r&u\end{pmatrix}}\,\Big|\,a,b,c,q,r,u \in K\right\}$$
if $k = 0$, and
$$M'(A) = \{T_1^{q}\,\big|\,q \in K\} \oplus (Ke)^A,\ \,
LM'(A) = 
\left\{\small{\begin{pmatrix}a&b&c\\0&q&0\\0&0&q\end{pmatrix}}\,\Big|\,a,b,c,q \in K\right\}$$
if $k \neq 0$, where
$$T_1^{q,r,u} = \small{\begin{pmatrix}0&0&0\\0&q&r\\0&r&u\end{pmatrix}}\ \, {\rm and}\ \,
T_1^{q} = \small{\begin{pmatrix}0&0&0\\0&q&0\\0&0&q\end{pmatrix}}.$$

When $k = 0$, $T = T_1^{q,r,u} + R$ with $R \in (Ke)^A$ is multiplier, if and only if
\begin{eqnarray*}
R((xz+yv)e) &=& R(\alpha\beta)\ =\ \alpha T_1^{q,r,u}(\beta) - T_1^{q,r,u}((xz+yv)e)\\
&=& \alpha((qz+rv)f+(rz+uv)g) = (qxz+r(xv+yz)+uyv)e
\end{eqnarray*}
for any $\alpha = xf+yg, \beta = zf+vg \in A_1$ with $x,y,z,v \in K$,
if and only if $q = u, r = 0$ and $R(xe) = qxe$ for all $x \in K$.
While, when $k \neq 0$, $T = T_1^q + R$ with $R \in (Ke)^A$ is a multiplier, if and only if
\begin{eqnarray*}
R((xz + y(kz+v))e)&=&R(\alpha\beta)\ 
=\ \alpha T_1^q(\beta) - T_1^q(xue+y(kz+v)e)\\
&=&\alpha(q\beta)\ =\ q(xz + y(kz+v))e
\end{eqnarray*}
for any $\alpha, \beta$, if and only if $R(xe) = qxe$ for any $x \in K$.
In the both cases, with the scalar multiplication $S_a$ by $a \in K$, we have\ \,$(T-S_a)(Ke) = \{0\}$.
%Then, $(T - S_a)(xe) = \{0\}$.  
Therefore, for arbitrary $k$ (either $k$ is zero or nonzero) we see
$$M(A) = \{S_a\,\big|\, a \in K\} \oplus \{R \in (Ke)^A\,|\,R(Ke) = \{0\}\}$$
and
$$LM(A) = \left\{\small{\begin{pmatrix}a&b&c\\0&a&0\\0&0&a\end{pmatrix}}\,\Big|\,a,b,c \in K\right\}.$$

\section{3-dimensional zeropotent algebras}
\begin{comment}
\begin{lemma}
Let $x = (x_1,x_2,\dots,x_n) \in K^n$.
An element $x_1e_1+ x_2e_2 + \cdots + x_ne_n$ of $A$ is in $\mbox{Ann}_l(A)$ (resp. $\mbox{Ann}_r(A)$), if and only if
$\vv{x}\vv{A} = (0,0,0)$ (resp. $\vv{A}\vv{x}^{\rm t} = (0,0,0)$).
\end{lemma}
\begin{proof}
The $j$-th element of $\vv{x}\vv{A}$ is $\sum_{i} x_i(e_ie_j) = \sum_{i}x_ie_ie_j = xe_j$.  They are all zero if and only if $x \in \mbox{Ann}_l(A)$.
\end{proof}
\end{comment}
$A$ is a {\it zeropotent algebra} if $x^2 = 0$ for all $x \in A$.
%From now on the characteristic of $K$ is not equal to 2.
The zeropotent algebra $A$ over $K$ is anti-commutative, that is,
$xy = -yx$ for all $x,y \in A$.  Thus we see
$$ A_0 = \mbox{Ann}_l(A) = \mbox{Ann}_r(A).$$

%\section{Zeropotent algebras of dimension 3}
%Let $K$ be a field of characteristic not equal to 2.
Let $A$ be a zeropotent algebras of dimension 3 over $K$ with char$(K) \neq 2$.  
Let ${E} = \{e, f, g\}$ be a basis of $A$.  
Because $A$ is anti-commutative, the multiplication table $\vv{A}$ of $A$ on $E$ is given as
$$\vv{A} = \begin{pmatrix}0&\alpha&-\beta\\-\alpha&0&\gamma\\
\beta&-\gamma&0\end{pmatrix},$$
where
$$
\begin{cases}
\gamma = fg = a_{11}e + a_{12}f + a_{13}g\\
\beta = ge = a_{21}e + a_{22}f + a_{23}g\\
\alpha = ef = a_{31}e + a_{32}f + a_{33}g
\end{cases} 
$$
for $a_{ij} \in K$.
%$$\begin{pmatrix}fg\\ge\\ef\end{pmatrix} = A\begin{pmatrix}e\\f\\g\end{pmatrix} = A\vv{E}^{\rm t}\ \ \mbox{with}\ \ 
We call $A = \small{\begin{pmatrix}a_{11}&a_{12}&a_{13}\\
a_{21}&a_{22}&a_{23}\\
a_{31}&a_{32}&a_{33}.
\end{pmatrix}}$
the {\it structural matrix} of $A$ %because it determines the structure of $A$ 
(we use the same symbol $A$ for the algebra and its structural matrix).

\begin{lemma}
If ${\rm rank}(A) \geq 2$, then $A_0 = \{0\}$.
\end{lemma}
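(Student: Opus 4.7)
The plan is to translate the condition $x \in A_0$ into a linear relation among the rows of the structural matrix, and then exploit the rank of that relation.

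First I would parametrize $x = x_1e + x_2f + x_3g$ and compute $xe$, $xf$, $xg$ using anti-commutativity together with the definitions $\alpha = ef$, $\beta = ge$, $\gamma = fg$. Since the diagonal products $ee$, $ff$, $gg$ vanish in a zeropotent algebra, I get
\begin{align*}
xe &= -x_2\alpha + x_3\beta,\\
xf &= x_1\alpha - x_3\gamma,\\
xg &= -x_1\beta + x_2\gamma.
\end{align*}
Because $A_0 = \mathrm{Ann}_l(A)$, the condition $x \in A_0$ is equivalent to these three expressions being zero, i.e.\ to the three identities $x_3\beta = x_2\alpha$, $x_1\alpha = x_3\gamma$, $x_2\gamma = x_1\beta$ in $A$.

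Next I would view these as a linear relation on the rows $\gamma, \beta, \alpha$ of the structural matrix. Stacking them, the conditions become exactly the matrix equation
\[
M_x \cdot A \;=\; 0, \qquad \text{where } M_x = \begin{pmatrix} 0 & x_3 & -x_2 \\ -x_3 & 0 & x_1 \\ x_2 & -x_1 & 0 \end{pmatrix}.
\]
The matrix $M_x$ is skew-symmetric of size $3$, so $\det(M_x) = 0$; on the other hand, if $x \neq 0$ then at least one $x_i$ is nonzero and a quick inspection shows $M_x$ has a nonvanishing $2\times 2$ minor. Hence $\mathrm{rank}(M_x) = 2$ whenever $x \neq 0$, which forces $\mathrm{rank}(A) \leq 3 - 2 = 1$.

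Taking the contrapositive, $\mathrm{rank}(A) \geq 2$ implies that the only $x$ satisfying $M_xA = 0$ is $x = 0$, and thus $A_0 = \{0\}$. I don't expect a real obstacle: the proof is essentially a computation, and the only mildly delicate point is confirming $\mathrm{rank}(M_x) = 2$ for $x \neq 0$, which can be verified by case analysis on whether $x_1$, $x_2$, or $x_3$ is nonzero (equivalently, by directly reading off a linearly independent pair of rows of $M_x$ in each case).
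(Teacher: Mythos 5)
Your proof is correct. The opening computation of $xe$, $xf$, $xg$ in terms of $\alpha,\beta,\gamma$ is exactly the one in the paper, but you finish differently: the paper picks a linearly independent pair among $\alpha,\beta,\gamma$ (say $\alpha,\beta$), reads off $b=c=0$ from $xe=0$ and then $a=0$ from $xf=0$, and dismisses the remaining configurations with ``the other cases are similar.'' You instead package the three annihilator conditions as $M_xA=0$ for the alternating matrix $M_x$, observe $\operatorname{rank}(M_x)=2$ for $x\neq 0$, and invoke the Sylvester-type inequality $\operatorname{rank}(M_xA)\ \geq\ \operatorname{rank}(M_x)+\operatorname{rank}(A)-3$ (equivalently, the column space of $A$ lies in the one-dimensional kernel of $M_x$) to force $\operatorname{rank}(A)\leq 1$. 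What your version buys is uniformity --- no case split on which pair of $\alpha,\beta,\gamma$ is independent --- and it actually proves the sharper converse-style statement that a nonzero annihilator element forces $\operatorname{rank}(A)\leq 1$; the cost is an appeal to the rank inequality, whereas the paper's argument is entirely elementary coefficient comparison. Both are complete; the only small points worth stating explicitly in your write-up are that $A_0=\operatorname{Ann}_l(A)$ by anti-commutativity (as the paper records just before the lemma) and the verification of a nonvanishing $2\times 2$ minor of $M_x$, which you correctly flag.
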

\begin{proof}
If ${\rm rank}(A) \geq 2$, at least two of $fg, ge, ef$ are linearly independent.  
Suppose that $\alpha = ef$ and $\beta = ge$ are linearly independent (the other cases are similar). 
If $x = ae +bf +cg$ with $a,b.c \in K$ is in $\mbox{Ann}_l(A)$, then 
$xe = -b\alpha + c\beta$, $xf = a\alpha-c\gamma$ and $xg = -a\beta+b\gamma$ are all zero.
It follows that $a = b = c = 0$.
%$$x\vv{A} = (-x_2\alpha+x_3\beta,x_1\alpha-x_3\gamma,-x_1\beta+x_2\gamma) = (0,0,0)$$
%implies $x =(0,0,0)$. 
Hence, we have Ann$_l(A) = \{0\}$ %by Lemma 2.1,
and $A_0 = \mbox{Ann}_l(A) = \{0\}$.
\end{proof}

\begin{theorem}  Let $A$ be a zeropotent algebra $A$ of dimension 3 with rank$(A) \geq 2$ over $K$.
Then, any weak multiplier of $A$ is the scalar multiplication $S_a$ for some $a \in K$,
that is, 
$$M(A) = M'(A) = LM(A) = LM'(A) = \{S_a\,|\,a \in K\}.$$
\end{theorem}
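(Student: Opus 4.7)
The plan is to reduce to linear multipliers via the triviality of $A_0$, translate the multiplier condition into the matrix equation of Theorem 4.2, and then exploit the skew-symmetry forced by anti-commutativity together with the rank hypothesis to pin $T$ down to a scalar.

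First I will apply the preceding lemma: since ${\rm rank}(A)\ge 2$, we have $A_0=\{0\}$, and Corollary 2.4 then gives $M'(A)=LM'(A)$ and $M(A)=LM(A)$. It therefore suffices to classify the linear weak multipliers $T\in L(A)$.

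Next, I will use Theorem 4.2. Anti-commutativity of $A$ gives $\vv{A}^{\rm t}=-\vv{A}$, so the weak-multiplier equation $\vv{A}T=T^{\rm t}\vv{A}$ is equivalent to the matrix $\vv{A}T$ (with entries in $A$) being antisymmetric. Expanding $\vv{A}T$ with $T=(t_{ij})$ and recording the entries in terms of $\alpha=ef$, $\beta=ge$, $\gamma=fg$, I will extract six scalar relations of the form $\lambda\alpha+\mu\beta+\nu\gamma=0$: three from the vanishing of the diagonal entries of $\vv{A}T$, and three from the cancellation of transposed pairs of off-diagonal entries. The coefficients $\lambda,\mu,\nu$ are linear in the entries of $T$ and in the differences $t_{ii}-t_{jj}$.

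The rank hypothesis says that at least two of $\alpha,\beta,\gamma$ are linearly independent, so after a cyclic relabeling of the basis if necessary, I may take $\alpha,\beta$ independent, and either $\gamma$ is independent of them (rank $3$) or $\gamma=k_1\alpha+k_2\beta$ for some $k_1,k_2\in K$ (rank $2$). Substituting into the six relations and comparing coefficients of the independent pair $\alpha,\beta$ will sequentially force every off-diagonal entry of $T$ to vanish and make the three diagonal entries equal, giving $T=aI=S_a$ for some $a\in K$. Since each $S_a$ is visibly a multiplier, the four sets in the statement then all coincide with $\{S_a\mid a\in K\}$. The one mild obstacle is the bookkeeping in this last step, which splits into a handful of linear-algebra subcases depending on which pair of $\{\alpha,\beta,\gamma\}$ is independent and whether the third lies in their span; each subcase is mechanical, with no conceptual twist beyond the antisymmetry already exploited.
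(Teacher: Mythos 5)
Your proposal is correct and follows essentially the same route as the paper: Lemma 7.1 plus Corollary 2.4 to reduce to linear maps, then Theorem 4.2's equation $\vv{A}T=T^{\rm t}\vv{A}$, then coefficient comparison against a linearly independent pair among $\alpha,\beta,\gamma$. Your reformulation of the condition as antisymmetry of $\vv{A}T$ is a tidy repackaging (and your explicit case split on whether the third product lies in the span of the other two is slightly more bookkeeping than the paper needs, since the relevant $\gamma$-coefficients vanish before independence is invoked), but the substance is identical.
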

\begin{proof}
By Lemma 7.1 and Corollary 2.4, any weak multiplier $T$ is a linear mapping.
Let $T \in L(A)$ be represented as (23).
%$\begin{pmatrix}a&b&c\\p&q&r\\u&v&w
%\end{pmatrix}$ with $a,b,c,p,q,r,u,v,w \in K$ on $\{e,f,g\}$.  
By Theorem 4.2,
$T$ is a weak multiplier, if and only if $\vv{A}T = T^{\rm t} \vv{A}$, if and only if
\begin{equation}\small{\begin{pmatrix}p\alpha-s\beta&q\alpha-t\beta&r\alpha-u\beta\\
-a\alpha+s\gamma&-b\alpha+t\gamma&-c\alpha+u\gamma\\
a\beta-p\gamma&b\beta-q\gamma&c\beta-r\gamma
\end{pmatrix}  = 
\begin{pmatrix}-p\alpha+s\beta&a\alpha-s\gamma&-a\beta+p\gamma\\
-q\alpha+t\beta&b\alpha-t\gamma&-b\beta+q\gamma\\
-r\alpha+u\beta&c\alpha-u\gamma&-c\beta+r\gamma
\end{pmatrix}}
\end{equation}
holds.
Suppose that $\alpha = ef, \beta = ge$ are linearly independent (the other cases are similar).
Then, because $p\alpha - s\beta = -p\alpha+s\beta$ by comparing the (1,1)-elements of two matrices in (25),  we have  $p = s = 0$.  
Comparing (1,2)-elements and (1,3)-elements, we have 
$q\alpha - t\beta = a\alpha-s\gamma = a\alpha$ and $r\alpha -u\beta = -a\beta+p\gamma = -a\beta$ respectively.  
It follows that $a = q = u$ and $r = t = 0$.
Comparing (2,2)-elements and (3,3)-elements, we see $b = c = 0$.
Consequently, (23) holds if and only if
\ \,$ b = c = p = r = s = t = 0\ \ \mbox{and}\ \ a = q = u$,
that is, $T = S_a$.
\end{proof}

In \cite{KSTT2} we classify the zeropotent algebras of dimension 3 over an algebraically field $K$ of characteristic not equal to 2.
Up to isomorphism, we have 10 families of zeropotent algebras.  They are\vspace{-1mm}
\[
Z_0, Z_1, Z_2, Z_3, \{Z_4(a)\}_{a\in H}, Z_5, Z_6, \{Z_7(a)\}_{a\in H}, Z_8 \mbox{ and } Z_9
\]
%\vspace{-1mm}
defined by the structural matrices 
\vspace{-2mm}
\[\small{
\begin{pmatrix}0&0&0\\0&0&0\\0&0&0\end{pmatrix},
\begin{pmatrix}0&0&0\\0&0&0\\0&0&1\end{pmatrix},
\begin{pmatrix}0&0&1\\0&0&0\\0&0&1\end{pmatrix},
\begin{pmatrix}0&1&0\\-1&0&0\\0&0&0\end{pmatrix},
\begin{pmatrix}0&0&0\\0&1&a\\0&0&1\end{pmatrix},}
\]\vspace{-2mm}
\[\small{
\begin{pmatrix}0&1&0\\0&0&0\\0&0&1\end{pmatrix},
\begin{pmatrix}0&1&1\\0&0&1\\0&0&1\end{pmatrix},
\begin{pmatrix}1&a&0\\0&1&0\\0&0&1\end{pmatrix},
\begin{pmatrix}1&2&2\\0&1&2\\0&0&1\end{pmatrix} \mbox{and }
\begin{pmatrix}1&3&3\\0&1&3\\0&0&1\end{pmatrix},}
\]
respectively.
%where $\mathcal{H}$ is the half plane.

$Z_0$ is the zero algebra, and
%$$M'(A) = A^A, \ M(A) = \{Y \in A^A\,|\,T(0) = 0\},$$ 
%and
%$$LM(A) = LM'(A) = L(A).$$
$Z_1$ is isomorphic to the 3-dimensional associative algebra $C_1$, 
and their (weak) multipliers are already determined in Section 6.  The algebras $Z_3$ to $Z_9$ have rank greater or equal to 2,
which are covered by Theorem 7.2.

Thus, only $A = Z_2$ is left to be analyzed.
The multiplication table $\vv{A}$ of $A$ is $\small{\begin{pmatrix}0&g&0\\-g&0&g\\0&-g&0\end{pmatrix}}$.
We see 
$A_0 = \mbox{Ann}_l(A) = \mbox{Ann}_r(A) = K(e+g),$
and we have the nihil decomposition
$A = A_0 \oplus A_1$ with $A_1 = Ke + Kf$.  A weak multiplier $T \in M'_1(A)$
is a linear mapping represented by $\small{\begin{pmatrix}a&b&c\\p&q&r\\0&0&0\end{pmatrix}}$ satisfying\vspace{-2mm}
$$\begin{pmatrix}pg&qg&rg\\-ag&-bg&-cg\\-pg&-qg&-rg\end{pmatrix} = \vv{A}T = T^{\rm t}\vv{A} = \begin{pmatrix}-pg&ag&pg\\-qg&bg&qg\\-rg&cg&rg\end{pmatrix}.$$
Hence, $a = -c= q$ and $b = p = r = 0$.
Let $T_a$ be this linear mapping, then by Theorem 3.1 we have\vspace{-2mm}
$$M'(A) = \{T_a\,|\,a \in K\} \oplus (K(f+g))^A$$
and 
$$LM'(A) = \left\{\small{\begin{pmatrix}a+s&t&-a+u\\0&a&0\\s&t&u\end{pmatrix}}\,\Big|\,a,s,t,u \in K\right\}.$$

By Corollary 3.2, a weak multiplier $T = T_a + R$ with $R \in (K(e+g))^A$ is a multiplier if and only if for any $\zeta = xe+yf$ and $\eta = ze+vf$ with $x, y, z, v \in K$,
\begin{eqnarray*}R((xv-yz)g)&=&R(\zeta\eta)\ =\ \zeta T_a(\eta) - T_a((xv-yz)g)\\ 
&=&\zeta(a\eta) + a(xv-yz)e\ =\ a(xv-yz)(e+g)
\end{eqnarray*}
holds.  It follows that $R(xg) = ax(e+g)$ for all $x \in K$.  Let $S_a$ be the scalar multiplication by $a$, then
$(T-S_a)(A) \subseteq K(e+g)$ and 
$(T - S_a)(Kg) = \{0\}$. % (T_a + R - S_a)(xg) = -axe + ax(e+g) - axg = 0$$
%for all $x \in K$, %that is, $R(kg) = \{0\}$. 
Hence, we obtain
$$M(A) = \{S_a\,|\,a \in K\} \oplus \{R \in (K(e+g))^A\,|\, R(Kg) = {0}\},$$
and
$$LM(A) = \left\{\small{\begin{pmatrix}a+s&t&0\\0&a&0\\s&t&a\end{pmatrix}}\,\Big|\,a,s,t \in K\right\}
= \left\{\small{\begin{pmatrix}a&b&0\\0&c&0\\a-c&b&c\end{pmatrix}}\,\Big|\,a,b,c \in K\right\}.$$

%\begin{figure}[htbp]
%\includegraphics[width=85mm]{C:/Users/koba/Documents/work/book2.eps}
%\end{figure}
%
%\begin{figure}[htbp]
%\includegraphics[width=40mm]{C:/Users/koba/Documents/work/hajimeteno.eps}
%\end{figure}

\begin{thebibliography}{7}

\bibitem{J}  B. E. Johnson, An introduction to the theory of centralizers, Proc. London Math. Soc., 14
(1964), 299--320.
\bibitem{Ka} E. Kaniuth, A Course in commutative Banach algebras, Springer, 2008.
\bibitem{KSTT} Y. Kobayashi, K. Shirayanagi, M. Tsukada and S.-E. Takahasi, A complete classification of
three-dimensional algebras over $\mathbb{R}$ and $\mathbb{C}$ , Asian-European
J. Math., {\bf 14} (2021) 2150131.
\bibitem{KSTT2}  Y. Kobayashi, K. Shirayanagi, M. Tsukada and S.-E. Takahasi, Classification of three dimensional zeropotent algebras over an algebraically closed field, Communication in Algebra,
vol. 45, 2017, 5037–-5052.
\bibitem{L}  R. Larsen, An introduction to the theory of multipliers, Berlin, New York, Springer--Verlag,
1971.
\bibitem{TKTSN} T. Tsukada and et al,, Linear algebra with Python, Theory and Applications, to be published in Springer.
\bibitem{W}  J. G. Wendel, Left Centralizers and Isomorphisms on group algebras, Pacific J. Math., 2 (1952), 251--261.
\bibitem{Z}A. Zivari-Kazempour, Almost multipliers of Frechet algebras, The J. Anal., 28(4) (2020), 1075-1084
\bibitem{Z2}  A. Zivari-Kazempour, Approximate $\theta$-multipliers on Banach algebras, Surv. Math. Appl., {\bf 77} (2022), 79--88.

\end{thebibliography}
\end{document}